\pgfplotsset{compat=1.18}
\newtheorem{proposition}{Proposition}[section]
\newtheorem{corollary}{Corollary}[section]
\newtheorem{conjecture}{Conjecture}[section]
\def\ps@pprintTitle{%
	\let\@oddhead\@empty
	\let\@evenhead\@empty
	\let\@oddfoot\@empty
	\let\@evenfoot\@oddfoot
}
\newtheorem{theorem}{Theorem}
\newtheorem{lemma}{Lemma}
\newtheorem{example}{Example}[section]
\newtheorem{remark}{Remark}[section]
\newtheorem{definition}{Definition}[section]
\begin{document}

\title{\textbf{Affirmative Results on a Conjecture on the Column Space of the Adjacency Matrix}}

\author{
\textbf{S.~Akansha}$^{1}$\footnote{Part of this work was done when the author was at Manipal Institute of Technology, Manipal, India.}  ~and
\textbf{K.~C.~Sivakumar}$^{2}$\thanks{Corresponding author.}\\
Department of Mathematics\\ 
$^{1}$The LNM Institute of Information Technology, Jaipur 302031, India\\
$^{2}$Indian Institute of Technology Madras, Chennai 600036, India\\
\texttt{agrawal.akansha5@gmail.com}, \texttt{kcskumar@iitm.ac.in}
}
\date{}
\maketitle
\begin{abstract}
The Akbari-Cameron-Khosrovshahi (ACK) conjecture, which appears to be unresolved, states that for any simple graph $G$ with at least one edge, there exists a nonzero {$\{0,1\}$}-vector in the row space of its adjacency matrix that is not a row of the matrix itself. In this talk, we present a unified framework that includes several families and operations of graphs that satisfy the ACK conjecture. 
Using these fundamental results, we introduce new graph constructions and demonstrate, through graph structural and linear  algebraic arguments, that these constructions adhere to the conjecture. Further, we show that certain graph operations preserve the ACK property. 
These results collectively expand the known classes of graphs satisfying the conjecture and provide insight into its structural invariance under composition and extension. 
\end{abstract}

AMS subject classification: [2020] {05C50, 15A03}

Keywords: ACK conjecture, core graphs, nut graphs, satellite graphs.

\section{Introduction}
Let us recall what has come to be known as the ACK Conjecture. All graphs in this article are simple and connected.
\begin{conjecture}\label{con:ack1}
  \textbf{\cite[Question 2]{akbari2004ranks}} For any graph $G$ (containing at least one edge), there exists a nonzero $\{0,1\}$-vector in the row space of its adjacency matrix $A_G$, over the field $\mathbb{R},$ that is not one of the rows of $A_G$.  
\end{conjecture}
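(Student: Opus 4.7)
The plan is to split the argument into two cases according to whether $A_G$ is singular or not. First, if $A_G$ is nonsingular, its row space equals $\mathbb{R}^n$, which automatically contains every $\{0,1\}$-vector of length $n$. Since $G$ has at least one edge we have $n \ge 2$, so there are $2^n - 1$ nonzero $\{0,1\}$-vectors and only $n$ rows; the inequality $2^n - 1 > n$ holds for $n \ge 3$, and the $n=2$ case forces $G = K_2$ (for which $(1,1)$ works) — so a trivial counting argument finishes this case.

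The second case, $A_G$ singular, is where the real work lies, and I would attempt it by identifying structural features of $G$ that induce linear dependencies among rows. The first thing to try is twin vertices (vertices with identical open or closed neighborhoods): in the non-adjacent case two rows coincide, while in the adjacent case the two rows differ only in the two coordinates indexed by the twins, so suitable sums/differences yield $\{0,1\}$-vectors whose supports can be checked to differ from every row. Failing that, I would look for pendant vertices, simplicial vertices, or modules, each of which pins down explicit low-weight vectors in $\ker A_G$ and, dually, low-weight $\{0,1\}$-indicators in $(\ker A_G)^\perp = \mathrm{row}(A_G)$. A natural inductive scaffolding is then to delete a vertex of one of these special types, invoke the conjecture on the smaller graph, and lift the witness vector back to $G$ by adjoining a coordinate whose value is forced by the orthogonality conditions, checking that the result lands in $\{0,1\}$ rather than in some broader $\{-1,0,1,2\}$.

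A third angle worth pursuing in parallel is spectral: if $G$ is bipartite or has a repeated eigenvalue, the corresponding eigenspace geometry can be used to construct $\{0,1\}$-combinations directly; for graphs with automorphisms, averaging a row over an orbit produces a vector that is constant on orbits and has a good chance of being $\{0,1\}$ after rescaling. Combining these constructions with the families identified in the paper (core graphs, nut graphs, satellites) should at least extend the class of ACK-verified graphs.

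The main obstacle, and the reason the conjecture has resisted proof since \cite{akbari2004ranks}, is the truly generic singular case: a connected, asymmetric graph of high minimum degree with no twins, no pendants, no nontrivial modules, and an irreducible, complicated null space. For such a graph there is no obvious row-combination that is automatically $\{0,1\}$-valued, and I expect that settling the conjecture in this regime would require either a structural dichotomy theorem (every simple graph with an edge contains one of a finite list of exploitable substructures) or a new arithmetic identity linking the supports of rows of $A_G$ to its spectrum. My proposal therefore realistically only covers the inductive/structural portion of the conjecture, which is precisely the scope addressed by the unified framework that follows.
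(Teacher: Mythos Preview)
The statement you are attempting to prove is not a theorem of the paper but the ACK conjecture itself, which the authors describe in the very first sentence of the abstract as ``appears to be unresolved.'' The paper does not prove Conjecture~\ref{con:ack1}; it verifies the conjecture only for specific families (satellite graphs, certain even-order graphs, certain Cartesian products with $K_2$, certain vertex-addition constructions, and some core-graph classes). So there is no ``paper's own proof'' to compare against, and your proposal cannot be judged correct because the target is an open problem.

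Your nonsingular case is fine and matches the known argument the paper cites from \cite{bera2022conjecture,sciriha2025potential}. The singular case, however, is not a proof but a catalogue of heuristics, and you concede this yourself in your final paragraph: twins, pendants, modules, automorphism-averaging, and bipartite structure each handle restricted subclasses, but you offer no mechanism for the generic case (connected, asymmetric, twin-free, pendant-free, high minimum degree, complicated null space), nor do you show that every singular graph must fall into one of the tractable subclasses. Several of your sketched steps are themselves incomplete even within their intended scope---for instance, observing that non-adjacent twins give two equal rows does not by itself produce a $\{0,1\}$-vector outside the row set, and ``lifting a witness vector back to $G$ by adjoining a coordinate'' need not land in $\{0,1\}$, as you note. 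What you have written is a reasonable research outline overlapping with the paper's partial results (pendants and twins are already excluded from potential counter-examples by Theorem~\ref{neccon} and the results of \cite{sciriha2025potential}), but it is not a proof of the conjecture and should not be presented as one.
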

Previous studies have established Conjecture~\ref{con:ack1} for nonsingular graphs, that is, for graphs whose adjacency matrices are invertible \cite{bera2022conjecture, sciriha2025potential}. Consequently, the conjecture remains unresolved only for singular graphs. Within the class of singular graphs, the conjecture has been verified for graphs of diameter at least $4$, as well as for graphs of diameter $2$ that have no dominating vertex and exactly $2n-5$ edges, where $n$ denotes the order of the graph \cite{bera2022conjecture}. More recently, Sciriha et al.~\cite{sciriha2025potential} introduced a novel approach based on changes in the nullity of the adjacency matrix under vertex addition. Using this framework, they identified a restricted class of singular graphs that may serve as potential counter-examples and showed, in particular, that such graphs cannot contain pendant vertices.

In this paper, we focus on identifying and characterizing additional families of singular graphs, distinct from the known cases, for which the ACK conjecture holds. We introduce the concept of kernel-vector-based non-duplicate zero-sum subsets as an equivalent formulation of the ACK conjecture. Our main contributions include the identification of several new families of singular graphs and graph operations that satisfy the ACK conjecture. First, we introduce a class of diameter-$2$ graphs with a dominating vertex that satisfy all known necessary conditions for potential counter-examples, as introduced by Sciriha et al.~\cite{sciriha2025potential}, and we verify that the conjecture holds for this class. Second, we establish that the ACK property is invariant under specific graph operations. We also present a constructive approach for building graphs of arbitrary order that satisfy the conjecture via vertex additions associated with disjoint zero-sum subsets.

The remainder of this paper is organized as follows. In Section \ref{sec:prel}, we establish the necessary notation and recall fundamental concepts including core and nut graphs. In Section \ref{secreform}, we introduce the pivotal concept of zero-sum subsets and use it as an equivalent formulation for the ACK conjecture. Section \ref{sec:graphsinC} is devoted to the construction of new families of singular graphs lying in the class of potential counter examples, and we prove that these families satisfy the ACK conjecture by identifying their underlying zero-sum structures. These are presented in Theorem \ref{thm:ackholdsforsn} and Corollary \ref{acknut}. In Section \ref{sec:graphopertn}, we study graph operations that preserve the ACK property, demonstrating invariance under cartesian products with $K_2$ (Theorem \ref{thm:concart}) and specific vertex-addition procedures (Theorem \ref{thm:single_vertex_addition}). Finally, Section~\ref{sec:coreack} identifies broader classes of core graphs of arbitrary nullity that satisfy the conjecture. These are presented in Theorem \ref{firstcore} and Theorem \ref{secondcore}.

\section{Preliminaries}\label{sec:prel}

The vector ${\bf e}$ will denote the vector each of whose entries is $1.$ The dimension of this vector will be clear from the context. A {matrix will be referred to as a {\it full} matrix, if all its components are nonzero. A similar definition applies to a vector.} For a matrix $X$, we will denote its $i$-th column by $x^i$. In what follows, we recall some basic definitions that are required in our discussion. We also include some notation.

For a graph $G=(V,E)$, where $V=\{v_1,v_2, \ldots, v_n\}$ is the set of vertices, and $E$ is the edge set, we use $(a_{ij})=A_{G}$ to denote its adjacency matrix. Thus, $a_{ij}=a_{ji}$ for all $i,j$ and $a_{ij} \neq 0,$ if there is an edge joining the vertices $v_i$ and $v_j$, with $i \neq j.$ For the matrix $X$, we let $N(X), R(X)$ to denote the null space and the range space of $X$, respectively.

\begin{enumerate}
     \item A vertex \(v\) in a graph $G$ is called a {\it core vertex} (CV), if there is a kernel vector $x$ such that $x_v$, the entry of $x$ corresponding to $v,$ is nonzero. A graph is a {\it core graph}, if each of its vertices is a core vertex (CV). Paraphrasing, a core graph is a graph which has a kernel vector, each of whose entries is nonzero.
     \item A graph \(G\) is called a {\it nut graph} if \(A_G\) has nullity $1,$ and has the property that all the components of any nonzero vector in $N(A_G)$, are nonzero. In other words, a nut graph is a core graph with nullity $1$.
The smallest nut graph have seven vertices and there are precisely three of them. Also, nut graphs are non-bipartite and
have no leaves. Here are two pertinent results on nut graphs: 
\begin{enumerate}
    \item A graph $G$ is a nut graph iff $\rm det (A) =0$ and all the entries of the matrix $\rm adj (A)$ are nonzero \cite[Lemma 2.2]{cfg}. The following is an example of a core graph which is not a nut graph \cite{sciriha2007characterization}.
\begin{figure}[ht]
\begin{center}
\begin{tikzpicture}
    \tikzstyle{v} = [circle, fill=black, inner sep=1.5pt, outer sep=0pt]

    \node[v] (m1) at (0, 0) {};
    \node[v] (m2) at (2, 0) {};
    
    \node[v] (tl) at (-1, 1) {};
    \node[v] (bl) at (-1, -1) {};

    \node[v] (tr) at (3, 1) {};
    \node[v] (br) at (3, -1) {};

    \draw (tl) -- (bl) -- (m1) -- (tl); 
    \draw (tr) -- (br) -- (m2) -- (tr); 
    \draw (m1) -- (m2);                 
    \draw (tl) -- (tr);
    \draw (bl) -- (br);
    
\end{tikzpicture}
\caption{Core graph which is not a nut graph.}
\label{fig:core}
\end{center}
\end{figure}
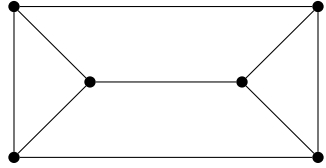
\item\label{thm:nut_construction} 
Let $G$ be a graph obtained by adjoining a new vertex to a graph $H$ with a nonsingular adjacency matrix $A_H$ so that this vertex is adjacent to exactly two distinct vertices $v_i, v_j \in V(H)$. Then $G$ is a nut graph iff for $(b_{ij})=B:=A^{-1}_H$ the following two conditions hold \cite[Theorem 2.1]{cfg}:
        \begin{enumerate}
            \item $b_{ii} + b_{jj} + 2b_{ij} = 0$.
            \item {$b^i+b^j$ is a full vector.}
        \end{enumerate}
        The following is an example of such a nut graph:
 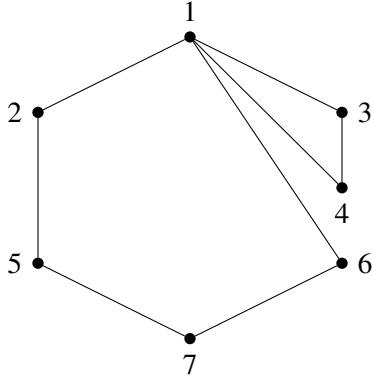
\begin{figure}[ht]
\centering
\begin{tikzpicture}
    \tikzstyle{v} = [circle, fill=black, inner sep=1.5pt, outer sep=0pt]

    \node[v,label=above:$1$] (1) at (0,0) {};
    \node[v,label=left:$2$]  (2) at (-2,-1) {};
    \node[v,label=right:$3$] (3) at (2,-1) {};
    \node[v,label=below:$4$] (4) at (2,-2) {};
    \node[v,label=left:$5$]  (5) at (-2,-3) {};
    \node[v,label=right:$6$] (6) at (2,-3) {};
    \node[v,label=below:$7$] (7) at (0,-4) {};

    \draw (1)--(2);
    \draw (1)--(3);
    \draw (1)--(4);
    \draw (1)--(6);
    \draw (2)--(5);
    \draw (3)--(4);

    \draw (5)--(7);
    \draw (6)--(7);
\end{tikzpicture}
\caption{A 7-vertex nut graph constructed from base graph $H$ by connecting a new vertex $7$ to vertices $5$ and $6$.}
\label{fig:nut_construction}
\end{figure}       
\end{enumerate}

     \item An eigenvalue of a matrix is {\it main}, if an associated eigenvector is not orthogonal to $\mathbf{e}.$
     \item Given a graph $G$, we let $G+v$ denote the graph obtained from $G$ by adding a vertex $v$, in such a way that $v$ is either an isolated vertex or is adjacent to one or more vertices in $G$. We may sometimes refer to $G$ as the {\it base} graph. 
     \item The {\it nullity} of a graph $G$ is defined as the nullity of $A_G$, denoted by $\eta(G).$ Given a base graph $G$, the vertex $v$ in $G+v$ is called a {\it Parter vertex} if $\eta(G+v)=\eta(G)-1.$ (This is called a $CFV_{upp}$ vertex in \cite{sciriha2025potential}). We make use of Parter vertices to obtain an equivalent formulation of the ACK conjecture, in Proposition \ref{prop:ackequivgeneral}.
     \item For a given vertex $u$ of a graph $G$, the vector $a^u$ is the $\{0,1\}$-vector whose $r$th-component is $1$ iff the vertex $v_r$ is adjacent to $u$. We call $a^u$ the {\it adjacency vector} corresponding to $u$. In \cite{sciriha2025potential}, $a^u$ is referred to as a {\it characteristic vector.}
     \item A vertex $v$ added to a graph $G$ in such a way that the adjacent vector $a^v$ is not equal to $a^u$ for any other added vertex $u$, is called a {\it non-duplicate vector}. 
     \item For a subset $S \subseteq V$ of a graph $G$, the vector 
$\chi_S$ is the \( \{0,1\}\)-vector whose $r$th component is $1$ iff the vertex $v_r \in S$. We call $\chi_S$ the characteristic vector corresponding to the vertex subset $S$.  
\end{enumerate}

The following characterization of a Parter vertex will prove to be quite useful.

\begin{theorem} \cite[Theorem  3.3]{sciriha2025potential}\label{parterchar}
Let $G$ be a singular base graph and $v$ be a non-duplicated vertex added to $G$ such that $v$ is adjacent to at least one vertex in $G$. Then $v$ is {\textit not} a Parter vertex of $G+v$ iff $a^v \in R(A_G)$. \end{theorem}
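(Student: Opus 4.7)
The plan is to analyze the kernel of the bordered adjacency matrix
$$A_{G+v} = \begin{pmatrix} A_G & a^v \\ (a^v)^T & 0 \end{pmatrix}$$
directly. A vector $\binom{x}{\alpha}$ with $x \in \mathbb{R}^n$ and $\alpha \in \mathbb{R}$ lies in $N(A_{G+v})$ iff
$$A_G x = -\alpha\, a^v \qquad \text{and} \qquad (a^v)^T x = 0.$$
The structural fact I would lean on throughout is that $A_G$ is symmetric, hence $R(A_G) = N(A_G)^\perp$; equivalently, $a^v \in R(A_G)$ iff $(a^v)^T z = 0$ for every $z \in N(A_G)$.

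For the contrapositive of the forward direction, I would assume $a^v \notin R(A_G)$. Then the first equation is inconsistent unless $\alpha = 0$, forcing every kernel vector of $A_{G+v}$ into the form $\binom{x}{0}$ with $x \in N(A_G)$ and $(a^v)^T x = 0$. Because $a^v \notin N(A_G)^\perp$, the linear functional $x \mapsto (a^v)^T x$ does not vanish identically on $N(A_G)$, so its zero set inside $N(A_G)$ has codimension exactly one. Consequently $\eta(G+v) = \eta(G) - 1$, and $v$ is a Parter vertex.

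For the reverse direction, I would assume $a^v \in R(A_G)$. Then for every $z \in N(A_G)$, the vector $\binom{z}{0}$ satisfies $A_G z = 0$ and $(a^v)^T z = 0$ by the orthogonality observation above, and therefore lies in $N(A_{G+v})$. This gives an injection of $N(A_G)$ into $N(A_{G+v})$, so $\eta(G+v) \geq \eta(G)$, showing that $v$ is not a Parter vertex.

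The only mild subtlety is the dimension count in the contrapositive case: one must be sure that the restriction of $(a^v)^T$ to $N(A_G)$ is nontrivial, and this is exactly the translation of $a^v \notin R(A_G) = N(A_G)^\perp$ via the symmetry of $A_G$. The hypotheses that $v$ is non-duplicated and adjacent to at least one vertex (i.e. $a^v \neq 0$) are not actually invoked by the linear-algebraic argument; they serve only to fix the setting in which Parter vertices are meaningful and to rule out the trivial degeneracy where $a^v$ is already a row of $A_G$.
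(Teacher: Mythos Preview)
Your argument is correct. The bordered-matrix analysis is clean: the key step is the identification $R(A_G)=N(A_G)^{\perp}$ via symmetry, after which both implications reduce to elementary dimension counts. In the contrapositive direction you correctly force $\alpha=0$ and then observe that $(a^v)^T$ restricts to a nonzero functional on $N(A_G)$, cutting the nullity by exactly one; in the other direction the embedding $z\mapsto\binom{z}{0}$ does the job.

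There is nothing to compare against here: the paper does not supply its own proof of this statement but simply cites it as \cite[Theorem~3.3]{sciriha2025potential}. Your write-up therefore stands on its own as a complete and standard proof of the quoted result. Your closing remark that the non-duplication and non-isolation hypotheses play no role in the linear algebra is also accurate; they are contextual assumptions carried over from the source paper rather than ingredients of the equivalence itself.
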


The next result will be used in proving that the nullity of satellite graph is $1$ (Theorem \ref{sat_nut}).
\begin{lemma}\label{nul1}
Let $B$ be a singular matrix. Suppose that no nonzero vector in $N(B)$ has a zero coordinate. Then $B$ has nullity $1$.
\end{lemma}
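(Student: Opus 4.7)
The plan is to argue by contradiction: assume $\eta(B) \geq 2$, and then manufacture, by taking a suitable linear combination of two linearly independent kernel vectors, a nonzero element of $N(B)$ with a zero coordinate, which would violate the hypothesis.

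Concretely, I would first note that $B$ is singular, so $\eta(B)\geq 1$; it therefore suffices to rule out $\eta(B)\geq 2$. Suppose then that $x,y\in N(B)$ are linearly independent. Pick any coordinate index $i$ and set
\[
z \;=\; y_i\,x \;-\; x_i\,y.
\]
Clearly $z\in N(B)$ because $N(B)$ is a subspace, and the $i$-th coordinate of $z$ is $y_i x_i - x_i y_i = 0$. Hence, provided $z\neq 0$, we have produced a nonzero vector in $N(B)$ with a zero coordinate, contradicting the assumption.

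The one thing to verify, which is the only mild obstacle, is that the coefficients $(y_i,-x_i)$ are not both zero, so that linear independence of $x$ and $y$ forces $z\neq 0$. This is immediate from the hypothesis applied to $x$ itself: since $x\in N(B)$ is nonzero (being one of two linearly independent vectors), no coordinate of $x$ vanishes, so in particular $x_i\neq 0$. Therefore $(y_i,-x_i)\neq(0,0)$, and linear independence of $x,y$ gives $z\neq 0$, completing the contradiction and forcing $\eta(B)=1$.
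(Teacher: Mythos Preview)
Your proof is correct and follows essentially the same argument as the paper: assume nullity at least two, take two independent kernel vectors, and form the linear combination that kills a chosen coordinate to obtain a contradiction. The only cosmetic differences are that the paper fixes the first coordinate rather than a generic index $i$, and simply notes both first coordinates are nonzero (by the hypothesis) rather than spelling out, as you do, that $x_i\neq 0$ suffices to make the combination nontrivial.
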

\begin{proof}
Suppose that $\text{dim}(N(B)) \geq 2$ and let $y, z \in N(B)$ be linearly independent. Consider their first coordinates $y_1$ and $z_1$, respectively (both of which are nonzero). Set $w:=z_1 y - y_1 z.$ Then $0 \neq w \in N(B)$ has its first coordinate zero, a contradiction.
\end{proof}
Let ${\cal C}$ denote the class of all graphs $G$ which have the potential to be counter examples to the ACK-conjecture. Recently, the following necessary conditions for such graphs were obtained. 
\begin{theorem}\label{neccon}
\cite[Theorem 5.1]{sciriha2025potential}
Let $G \in {\cal C}$. We then have the following:
\begin{enumerate}
\item $G$ is a core graph;
\item $0$ is a main eigenvalue of $G$;
\item every vertex of $G$ lies on a triangle;
\item for every vertex $u \in V$, each vertex in $N(u)$ forms a triangle containing $u$;
\item $G$ is not regular;
\item $G$ must be connected;
\item $G$ is not bipartite;
\item the diameter of $G$ is $2$ or $3$.    
\end{enumerate} 
\end{theorem}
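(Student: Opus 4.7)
My plan is to prove each of the eight conditions by contrapositive: assuming $G$ fails a given condition, I would construct a nonzero $\{0,1\}$-vector in $R(A_G)$ that is not a row of $A_G$, which witnesses that ACK already holds for $G$ and hence $G \notin \mathcal{C}$. The unifying algebraic tool is that $A_G = A_G^T$ implies $R(A_G) = N(A_G)^{\perp}$, so a candidate $\{0,1\}$-vector $w$ lies in the row space precisely when $w \cdot x = 0$ for every $x \in N(A_G)$. A second parity observation I would use throughout is that, since $A_G$ has zero diagonal, $\mathbf{e}$ can never equal a row of $A_G$, and $e_v$ can coincide with a row only when $v$ supports a pendant neighbour.

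The easiest group is (1), (2), and (5), all handled via the standard candidates $\mathbf{e}$ and $e_v$. If $0$ is not a main eigenvalue, then every kernel vector is orthogonal to $\mathbf{e}$, placing $\mathbf{e}$ in $R(A_G)$; if $G$ is $r$-regular with $r \geq 1$, then $\mathbf{e} = A_G(\mathbf{e}/r) \in R(A_G)$ directly. In either case the parity observation closes the argument. For (1), a vertex $v$ that is not a core vertex satisfies $e_v \in R(A_G)$; if $v$ has no pendant neighbour the proof terminates, while the pendant case is resolved by summing $e_v$ over suitable subsets or by falling back onto reasoning of the type used for (4).

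For the structural conditions (3), (4), (6), (7) I would construct candidate vectors from characteristic vectors $\chi_S$ of carefully chosen subsets, and verify membership in $R(A_G)$ by combining the symmetric-matrix orthogonality criterion with the forbidden local configuration (disconnected component, bipartition, absent triangle through a vertex, missing common neighbour on an edge). Condition (8) is essentially an appeal to the Bera et al. result that ACK already holds for singular graphs of diameter at least $4$, so only diameters $2$ and $3$ remain as candidates for $\mathcal{C}$. Throughout, I would also invoke the Parter-vertex characterization of Theorem~\ref{parterchar} to convert some ``$a^v \in R(A_G)$'' conditions into statements about nullity change under vertex addition, which should streamline several of the case analyses.

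The main obstacle I anticipate is condition (4), the strongest local triangle requirement. If an edge $uv$ has no common neighbour, one must locate, among $\{0,1\}$-vectors built from $N(u)$, $N(v)$, and $N(u) \triangle N(v)$, one that both lies in $R(A_G)$ and fails to match any adjacency vector $a^w$. The orthogonality test seems to demand a case analysis on the induced subgraph on $N(u) \cup N(v)$, using that every neighbour of $u$ other than $v$ must lie in a triangle with $u$ by (3), which must be leveraged carefully to force the required kernel-orthogonality. Conditions (3) and (4) are thus where the proof effort would concentrate, while (1), (2), (5)--(8) are comparatively routine within the same orthogonality-to-the-kernel framework.
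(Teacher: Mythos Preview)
The paper does not prove this theorem; it is quoted from \cite[Theorem~5.1]{sciriha2025potential} and used as an imported result, so there is no proof in the present paper to compare your proposal against.

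As for the proposal itself, the framework---contrapositive, using $R(A_G)=N(A_G)^\perp$ for the symmetric matrix $A_G$, with $\mathbf{e}$ and $e_v$ as canonical witnesses---is sound, and your arguments for (2), (5), and (8) are essentially complete. But the remaining items are not actually proved. For (1) the pendant-neighbour case is deferred to ``reasoning of the type used for (4),'' yet (4) is precisely the item you explicitly leave open as an ``obstacle,'' so this is a forward reference to an unwritten argument. For (3), (4), (6), and (7) you only announce that characteristic vectors $\chi_S$ of ``carefully chosen subsets'' will work, without specifying which subsets, why $\chi_S\perp N(A_G)$, or why $\chi_S$ avoids every row of $A_G$. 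Since (3) and (4) carry most of the content of the theorem, what you have is a reasonable outline of how one might attack the result, not a proof.
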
 

We show that all graphs that we study here, satisfy these necessary conditions.

\section{A reformulation of the conjecture}\label{secreform}
We introduce the notion of a zero sum subset, using which we reformulate the ACK conjecture, for a class of graphs.
\begin{definition}[Zero-sum subset]
Given a graph $G$, let $0\neq x \in N(A_G)$.  
A {nonempty} subset $S\subseteq V$ is called a zero-sum subset relative to $x$, if
\[
\sum_{v\in S} x_v = 0.
\]
Such a subset will be referred to as non-duplicate, if its characteristic vector 
$\chi_S$ does not coincide with any row of $A_G$.
\end{definition}
\begin{lemma}\label{lem:singulargraphzss}
Any non-trivial, simple, connected singular graph has a non-empty zero-sum subset.
\end{lemma}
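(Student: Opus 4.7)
The plan is to read off a zero-sum subset directly from the defining equation $A_G x = 0$. Since $G$ is singular, the null space $N(A_G)$ is nontrivial; choose any $0 \neq x \in N(A_G)$. For each vertex $v \in V$, the $v$th coordinate of the identity $A_G x = 0$ gives
\[
0 \;=\; (A_G x)_v \;=\; \sum_{u \in N(v)} x_u,
\]
so the open neighborhood $N(v)$ of every vertex $v$ is an automatic candidate for a zero-sum subset relative to $x$.

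Next I would verify that this candidate is nonempty. Because $G$ is non-trivial (so $|V| \geq 2$) and connected, every vertex has at least one neighbor; hence $N(v) \neq \emptyset$ for every $v \in V$. Taking $S := N(v)$ for any such $v$ then yields a nonempty subset of $V$ satisfying $\sum_{u \in S} x_u = 0$, which is precisely the definition of a zero-sum subset relative to $x$.

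There is essentially no obstacle here: the argument is immediate from the row-interpretation of $A_G x = 0$ together with the existence of at least one neighbor per vertex. The only subtlety worth flagging is that this lemma demands only a zero-sum subset, \emph{not} the stronger non-duplicate zero-sum subset whose characteristic vector differs from every row of $A_G$; indeed, the subset $S = N(v)$ constructed above has characteristic vector equal to the $v$th row of $A_G$ and hence is duplicate by construction. Upgrading this existence result to a non-duplicate zero-sum subset is where the real work toward the ACK conjecture begins, and presumably occupies the later sections.
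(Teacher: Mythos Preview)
Your proof is correct and follows essentially the same approach as the paper: pick a nonzero $x\in N(A_G)$, observe that the $v$th row of $A_Gx=0$ reads $\sum_{u\in N(v)}x_u=0$, and use connectedness (plus $|V|\ge 2$) to ensure $N(v)\neq\emptyset$. Your additional remark that this $S=N(v)$ is automatically a duplicate subset is accurate and nicely anticipates the distinction that drives the later results.
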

\begin{proof}
    Let $G$ be a non-trivial, simple, connected graph with vertex set $V$ such that there exists a non-zero vector $x \in N(A_G)$. Therefore, for any vertex $v \in V$, we have $\sum_{w \in N(v)} x_w = 0$. Let $v_0$ be an arbitrary vertex in $V,$ and define the set $S = N(v_0)$. By the kernel condition at $v_0$, we have $\sum_{w \in N(v_0)} x_w = 0$. Since $S=N(v_0)$, the sum over $S$ is zero. As $G$ is connected, it has no isolated vertices. Thus, $\text{deg}(v_0) \geq 1$, which implies that $S = N(v_0) \neq \emptyset$. 
\end{proof}
For graphs of nullity one, there is an equivalent formulation for the ACK conjecture. This is the next result. 
\begin{proposition}\label{prop:ackequivgeneral}
A graph $G$ satisfies the ACK conjecture if and only if there exists a non-empty,
non-duplicate subset $S\subseteq V(G)$ such that $\chi_S \in N(A_G)^{\perp}.$
\end{proposition}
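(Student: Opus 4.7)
The plan is to show both directions of the equivalence by dictionary: $\{0,1\}$-vectors correspond to characteristic vectors of non-empty subsets of $V$, the row space of $A_G$ coincides with $N(A_G)^\perp$, and the condition \emph{not a row of $A_G$} translates directly into the non-duplicate condition. No combinatorial input is needed beyond unwinding the definitions; the only linear-algebra fact used is the standard identity $R(A_G)=N(A_G)^\perp$ coming from symmetry.

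First I would record the key algebraic fact. Since $A_G$ is a real symmetric matrix, the row space of $A_G$ equals $R(A_G^T)=R(A_G)$, and the fundamental theorem of linear algebra gives
\[
R(A_G)=N(A_G^T)^\perp=N(A_G)^\perp.
\]
So membership in the row space of $A_G$ is the same as membership in $N(A_G)^\perp$.

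Next I would carry out the forward direction. Assume $G$ satisfies the ACK conjecture, so there is a nonzero $\{0,1\}$-vector $y$ in the row space of $A_G$ that is not a row of $A_G$. Let $S=\{v_r\in V : y_r=1\}$. Then $y=\chi_S$, and $S\neq\emptyset$ since $y\neq 0$. By the displayed identity, $\chi_S\in N(A_G)^\perp$. Since rows of $A_G$ are precisely the adjacency vectors $a^u$ for $u\in V$, and $\chi_S\neq a^u$ for every $u$, the subset $S$ is non-duplicate in the sense of the definition.

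For the converse, I would start from a non-empty non-duplicate subset $S$ with $\chi_S\in N(A_G)^\perp$. Being non-empty, $\chi_S$ is a nonzero $\{0,1\}$-vector; by the displayed identity it lies in the row space of $A_G$; and being non-duplicate, it does not coincide with any row $a^u$ of $A_G$. This is exactly the statement of the ACK conjecture for $G$. I do not foresee any real obstacle here: the statement is essentially a reformulation, and the only non-trivial ingredient is the symmetry-based identification $R(A_G)=N(A_G)^\perp$, which also clarifies why the restriction to nullity one mentioned in the surrounding text is, in fact, unnecessary for this equivalence itself.
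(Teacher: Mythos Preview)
Your argument is correct: the identification $R(A_G)=N(A_G)^\perp$ for the symmetric matrix $A_G$, together with the tautological correspondence between nonzero $\{0,1\}$-vectors and characteristic vectors $\chi_S$ of non-empty subsets, yields both directions immediately, and the non-duplicate condition is by definition exactly the requirement that $\chi_S$ not be a row of $A_G$.

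The paper does not actually supply a proof of this proposition (it defers it to a future revision), so there is nothing to compare against directly. The preliminaries do signal an intended route via Parter vertices and Theorem~\ref{parterchar}, presumably arguing that $\chi_S\in R(A_G)$ is equivalent to the added vertex with adjacency vector $\chi_S$ failing to be Parter; but that detour would ultimately reduce to the same range/kernel orthogonality you invoke, so your direct argument is at least as clean. Your closing observation is also well taken: the sentence preceding the proposition restricts attention to nullity one, but the equivalence as stated (and as you prove it) holds for arbitrary nullity, with the nullity-one case merely giving the simpler ``zero-sum'' description recorded in Remark~\ref{rem:ackequiv}.
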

\begin{proof}
The proof will be included in the revision to this version.
\end{proof}

\begin{remark}\label{rem:ackequiv}
Note that in the case of graphs with nullity one, the condition $\chi_S \in N(A_G)^{\perp}$ simply means that $S$ is a zero-sum subset with respect to a nonzero kernel vector. 
In particular, if $G$ is a graph of nullity one, then $G$ satisfies the ACK conjecture if and only if there exists a non-empty, non-duplicate zero-sum subset $S \subseteq V(G)$. We will use this zero-sum subset idea throughout when proving the ACK conjecture for graphs of nullity one.
\end{remark}

\begin{corollary}\label{cor:zssnqdegree}
Let $G$ be a graph with nullity one.  
If $G$ has a non-empty zero-sum subset $S\subseteq V$ such that 
$|S| \notin \{\deg(v) : v\in V\},$ then $G$ satisfies the ACK conjecture.
\end{corollary}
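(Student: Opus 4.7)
The plan is to reduce the statement to Remark \ref{rem:ackequiv} by verifying that the given zero-sum subset $S$ is automatically non-duplicate. Since $G$ has nullity one, Remark \ref{rem:ackequiv} tells us that ACK for $G$ is equivalent to the existence of a non-empty, non-duplicate zero-sum subset of $V$. A zero-sum subset $S$ is already handed to us by hypothesis, so the only thing left to verify is the non-duplication condition: $\chi_S \neq a^v$ for every $v \in V(G)$.

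To do this, I would argue by cardinalities of the supports. The number of nonzero entries of the characteristic vector $\chi_S$ is exactly $|S|$, while the number of nonzero entries of an adjacency row $a^v$ is exactly $\deg(v)$. If $\chi_S$ coincided with $a^v$ for some vertex $v$, then in particular $|S| = \deg(v)$, placing $|S|$ in the degree set $\{\deg(v) : v \in V(G)\}$. The hypothesis $|S| \notin \{\deg(v) : v \in V\}$ rules this out, so $\chi_S$ is not a row of $A_G$; that is, $S$ is non-duplicate.

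Combining this with the fact that $S$ is non-empty and zero-sum, Remark \ref{rem:ackequiv} immediately delivers the ACK conjecture for $G$. There is essentially no obstacle here: the corollary is a convenient packaging of the equivalent reformulation together with the trivial observation that equality of $\{0,1\}$-vectors forces equality of their numbers of ones. Accordingly, my write-up would be only a few lines, and the main content is simply pointing to Remark \ref{rem:ackequiv} and comparing the sizes of supports.
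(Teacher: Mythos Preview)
Your proposal is correct and matches the paper's own proof essentially line for line: both observe that a row of $A_G$ has $\deg(v)$ ones, so $|S|\notin\{\deg(v):v\in V\}$ forces $\chi_S$ to be non-duplicate, and then invoke the nullity-one reformulation (Remark~\ref{rem:ackequiv}) to conclude.
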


\begin{proof}
By definition, any row of $A_G$ corresponding to a vertex $v$ has exactly $\deg(v)$ ones.  
Hence, if $|S|$ is not equal to any vertex degree, the characteristic vector $\chi_S$ cannot coincide with any row of $A_G$, so $S$ is automatically non-duplicate. $S$ is the required non-empty, non-duplicate zero-sum subset. Therefore, $G$ satisfies the  ACK conjecture.
\end{proof}

\begin{remark}
In general, the converse of Corollary~\ref{cor:zssnqdegree} does not hold. 
For instance, consider the graph $G$ on $14$ vertices with edge set
\[
\begin{aligned}
E(H)=\{&
\{1,2\},\ \{1,3\},\ \{1,4\},\ \{1,6\},\ \{1,7\},\ \{1,8\},\ \{1,9\},\ \{1,10\},\ \{1,11\},\ \{1,12\},\\[3pt] &\{1,13\},\ \{1,14\},\ \{2,3\},\ \{2,10\},\ \{2,11\},\ \{3,4\},\ \{3,5\},\ \{4,5\},\ \{4,12\},\ \{5,6\},\ \{6,7\},\\[3pt]
&\{6,8\},\ \{6,13\},\ \{7,8\},\ \{7,9\},\ \{8,9\},\ \{8,10\},\ \{8,14\}, \ \{9,10\}\}.
\end{aligned}
\]
\begin{figure}[h!]
\centering
\begin{tikzpicture}[scale=1] 

    \tikzset{
        v_dom/.style = {circle, fill=blue, draw=black, minimum size=4pt, inner sep=0pt},
        v_4/.style   = {circle, fill=red, draw=black, minimum size=4pt, inner sep=0pt},
        v_2/.style   = {circle, fill=black, draw=black, minimum size=4pt, inner sep=0pt}
    }

    \node[v_dom, label=above:1] (vdom) at (0,0) {};

    \foreach \i in {1,...,9} {
        \pgfmathtruncatemacro{\labelnum}{\i+1} 
        \pgfmathtruncatemacro{\angle}{360/9 * (\i - 1) + 90} 
        \node[v_4, label=\angle:\labelnum] (u\i) at (\angle : 1.5cm) {};
    }

    \node[v_2, label={360/9 * (1 - 1) + 100}:11] (w1) at ({360/9 * (1 - 1) + 100} : 2.2cm) {};
    \node[v_2, label={360/9 * (3 - 1) + 100}:12] (w3) at ({360/9 * (3 - 1) + 100} : 2.2cm) {};
    \node[v_2, label={360/9 * (5 - 1) + 100}:13] (w5) at ({360/9 * (5 - 1) + 100} : 2.2cm) {};
    \node[v_2, label={360/9 * (7 - 1) + 100}:14] (w7) at ({360/9 * (7 - 1) + 100} : 2.2cm) {};

    \foreach \i in {1,...,9} {
        \draw[thick] (vdom) -- (u\i);
    }
    \foreach \i in {w1,w3,w5,w7} {
        \draw[thick] (vdom) -- (\i);
    }

    \draw[thick] (u1) -- (u2) -- (u3) -- (u4) -- (u5) -- (u6) -- (u7) -- (u8) -- (u9) -- (u1);

    \draw[thick] (u1) -- (w1);
    \draw[thick] (u3) -- (w3);
    \draw[thick] (u5) -- (w5);
    \draw[thick] (u7) -- (w7);
    
    \draw[thick] (u2) -- (u4);
    \draw[thick] (u5) -- (u7);
    \draw[thick] (u6) -- (u8);
    \draw[thick] (u7) -- (u9);

\end{tikzpicture}
\caption{The degree sequence is \(\{2,2,2,2, 4,4,4,4,4,4,4, 5,6, 13\}\).}
\label{fig:custom_graph_labeled_14}
\end{figure}
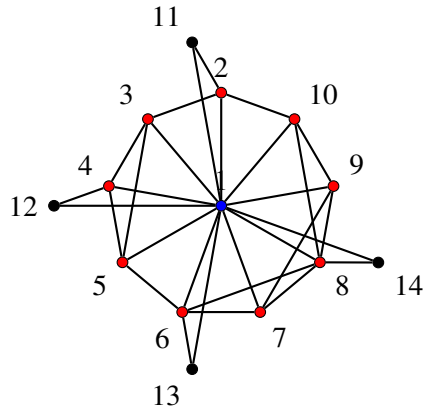

Then
\[
A_G = \left[
\begin{array}{cccccccccccccc} 
0 & 1 & 1 & 1 & 0 & 1 & 1 & 1 & 1 & 1 & 1 & 1 & 1 & 1\\
1 & 0 & 1 & 0 & 0 & 0 & 0 & 0 & 0 & 1 & 1 & 0 & 0 & 0\\
1 & 1 & 0 & 1 & 1 & 0 & 0 & 0 & 0 & 0 & 0 & 0 & 0 & 0\\
1 & 0 & 1 & 0 & 1 & 0 & 0 & 0 & 0 & 0 & 0 & 1 & 0 & 0\\
0 & 0 & 1 & 1 & 0 & 1 & 0 & 0 & 0 & 0 & 0 & 0 & 0 & 0\\
1 & 0 & 0 & 0 & 1 & 0 & 1 & 1 & 0 & 0 & 0 & 0 & 1 & 0\\
1 & 0 & 0 & 0 & 0 & 1 & 0 & 1 & 1 & 0 & 0 & 0 & 0 & 0\\
1 & 0 & 0 & 0 & 0 & 1 & 1 & 0 & 1 & 1 & 0 & 0 & 0 & 1\\
1 & 0 & 0 & 0 & 0 & 0 & 1 & 1 & 0 & 1 & 0 & 0 & 0 & 0\\
1 & 1 & 0 & 0 & 0 & 0 & 0 & 1 & 1 & 0 & 0 & 0 & 0 & 0\\
1 & 1 & 0 & 0 & 0 & 0 & 0 & 0 & 0 & 0 & 0 & 0 & 0 & 0\\
1 & 0 & 0 & 1 & 0 & 0 & 0 & 0 & 0 & 0 & 0 & 0 & 0 & 0\\
1 & 0 & 0 & 0 & 0 & 1 & 0 & 0 & 0 & 0 & 0 & 0 & 0 & 0\\
1 & 0 & 0 & 0 & 0 & 0 & 0 & 1 & 0 & 0 & 0 & 0 & 0 & 0
\end{array}
\right].
\]
The graph $G$ is illustrated in Figure~\ref{fig:custom_graph_labeled_14} and has nullity one, {whose} kernel {is} spanned by the vector:
\[ x = (0, 0, 0, 0, 0, 0, 1, 0, 0, -1, 1, 0, -1, 0) ^T\]
This vector {satisfies} the row dependency $a^7 - a^{10} + a^{11} - a^{13} = 0 =\langle x,e\rangle.$ {Thus,} $0$ is not a main eigenvalue of $G$ and ${\bf e}$ lies in the row space of $A_G$. Thus $G$ satisfies the ACK conjecture without admitting a zero-sum subset $S:=\{v_7, v_{10}, v_{11}, v_{13}\}$ of size different from any vertex degree. 
\end{remark}
\section{Graphs in \texorpdfstring{$\cal{C}$}{C} satisfying ACK conjecture}\label{sec:graphsinC}
We introduce a new class of graphs, whose members satisfy the necessary conditions of Theorem \ref{neccon}, and the ACK conjecture, as well. This means that, despite the fact that these graphs are identified as potential counter examples, we prove that they are not, as such.

\subsection{Satellite graphs}
\begin{definition}\label{def:satellite}
For a positive integer $k$, let \(S_{2k+1}\) denote the graph on \(n = 2k + 1\) vertices, called satellite graphs, which are defined as follows: The vertex set \(V\) is partitioned into three disjoint sets:  
A single dominating vertex, \(v_{\text{dom}}\);  
A set of degree 4 vertices \(V_4 = \{u_1, u_2, \dots, u_k\}\) 
and a set of degree 2 vertices \(V_2 = \{w_1, w_2, \dots, w_k\}\), 
each consisting of \(k\) vertices. The edge set \(E\) is defined by:
\begin{itemize}
    \item The vertex \(v_{\text{dom}}\) is adjacent to every other vertex in the graph. so that \((v_{\text{dom}}, v) \in E\) for all \(v \in V_4 \cup V_2\).
    \item Each vertex in \(V_4\) is adjacent to two vertices in \(V_4\) and one vertex in \(V_2\).
    \item Each vertex in \(V_2\) is adjacent to one vertex in \(V_4\).
\end{itemize}
\end{definition}

It is clear that $k \geq 3.$ Thus, the smallest graph in this family is obtained when \(k=3\), with \(n = 2(3) + 1 = 7\) vertices. Henceforth, we will denote a satellite graph on \(2k+1\) vertices, by $S_{2k+1},$ also assuming that $k \geq 3.$ Let us list a few instances of such graphs.

\begin{figure}[h!]
\centering
\begin{tikzpicture}[scale=1] 
    \tikzset{
        v_dom/.style = {circle, fill=blue, draw=black, minimum size=4pt, inner sep=0pt},
        v_4/.style   = {circle, fill=red, draw=black, minimum size=4pt, inner sep=0pt},
        v_2/.style   = {circle, fill=black, draw=black, minimum size=4pt, inner sep=0pt}
    }

    \node[v_dom] (vdom) at (0,0) {};

    \foreach \i in {1,...,3} {
        \node[v_4] (u\i) at ({360/3 * (\i - 1) + 90} : 1.5cm) {};
    }

    \foreach \i in {1,...,3} {
        \node[v_2] (w\i) at ({360/3 * (\i - 1) + 100} : 2.5cm) {};
    }

    \foreach \i in {1,...,3} {
        \draw[thick] (vdom) -- (u\i);
        \draw[thick] (vdom) -- (w\i);
    }

    \draw[thick] (u1) -- (u2) -- (u3) -- (u1) --cycle;

    \foreach \i in {1,...,3} {
        \draw[thick] (u\i) -- (w\i);
    }

\end{tikzpicture}
\hspace{0.2cm}
\begin{tikzpicture}[scale=1] 

    \tikzset{
        v_dom/.style = {circle, fill=blue, draw=black, minimum size=4pt, inner sep=0pt},
        v_4/.style   = {circle, fill=red, draw=black, minimum size=4pt, inner sep=0pt},
        v_2/.style   = {circle, fill=black, draw=black, minimum size=4pt, inner sep=0pt}
    }

    \node[v_dom] (vdom) at (0,0) {};

    \foreach \i in {1,...,4} {
        \node[v_4] (u\i) at ({360/4 * (\i - 1) + 90} : 1.5cm) {};
    }

    \foreach \i in {1,...,4} {
        \node[v_2] (w\i) at ({360/4 * (\i - 1) + 100} : 2.5cm) {};
    }

    \foreach \i in {1,...,4} {
        \draw[thick] (vdom) -- (u\i);
        \draw[thick] (vdom) -- (w\i);
    }

    \draw[thick] (u1) -- (u2) -- (u3) -- (u4) -- (u1) --cycle;

    \foreach \i in {1,...,4} {
        \draw[thick] (u\i) -- (w\i);
    }

\end{tikzpicture}
\hspace{0.3cm}
\begin{tikzpicture}[scale=1] 

    \tikzset{
        v_dom/.style = {circle, fill=blue, draw=black, minimum size=4pt, inner sep=0pt},
        v_4/.style   = {circle, fill=red, draw=black, minimum size=4pt, inner sep=0pt},
        v_2/.style   = {circle, fill=black, draw=black, minimum size=4pt, inner sep=0pt}
    }

    \node[v_dom] (vdom) at (0,0) {};

    \foreach \i in {1,...,5} {
        \node[v_4] (u\i) at ({360/5 * (\i - 1) + 90} : 1.5cm) {};
    }

    \foreach \i in {1,...,5} {
        \node[v_2] (w\i) at ({360/5 * (\i - 1) + 100} : 2.5cm) {};
    }

    \foreach \i in {1,...,5} {
        \draw[thick] (vdom) -- (u\i);
        \draw[thick] (vdom) -- (w\i);
    }

    \draw[thick] (u1) -- (u2) -- (u3) -- (u4) -- (u5) -- (u1) --cycle;

    \foreach \i in {1,...,5} {
        \draw[thick] (u\i) -- (w\i);
    }

\end{tikzpicture}
\caption{Satellite graphs for $k=3,4$ and $5$.}
\label{fig:graph_g7}
\end{figure}
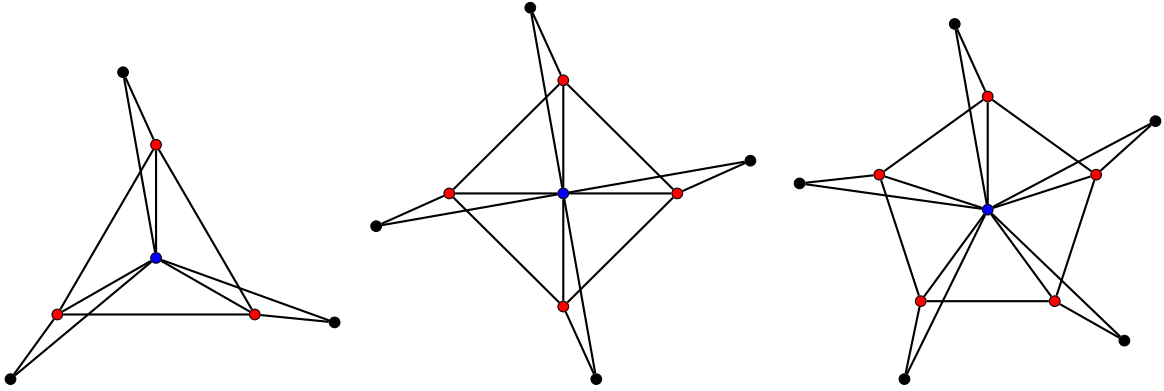
\noindent Next, we prove that satellite graphs are nut graphs.

\begin{theorem}\label{sat_nut}
Each $S_{2k+1}$ is a nut graph. 
\end{theorem}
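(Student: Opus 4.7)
The plan is to directly solve the kernel equations $A_G x = 0$ and show that the null space is one-dimensional and spanned by a full vector. A nut graph, by definition, is exactly a graph with those two properties, so once the kernel is explicitly computed, the conclusion follows immediately. No invocation of Lemma \ref{nul1} or the characterization via $\mathrm{adj}(A)$ will be required — the direct computation delivers both pieces simultaneously.

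First, I would establish a structural observation from the definition: each $w_i$ has degree $2$, and one of its neighbors is $v_{\text{dom}}$, so $w_i$ is adjacent to exactly one vertex of $V_4$. A simple double-counting of edges between $V_2$ and $V_4$ (there are exactly $k$ of them, with both sides having degree $1$ into the other side) shows that the bipartite graph between $V_2$ and $V_4$ is a perfect matching. Relabel if necessary so that $w_i$ is matched to $u_i$. The subgraph induced on $V_4$ is $2$-regular, but its precise cycle structure will not matter for the kernel computation.

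Next, write a vector in the kernel as $x = (x_0, x_1, \ldots, x_k, y_1, \ldots, y_k)$, where $x_0$ is the coordinate at $v_{\text{dom}}$, $x_i$ is the coordinate at $u_i$, and $y_i$ is the coordinate at $w_i$. The equation at $w_i$ is $x_0 + x_i = 0$, which forces $x_i = -x_0$ for every $i$. The equation at $u_j$ is $x_0 + x_a + x_b + y_j = 0$, where $u_a, u_b$ are the two neighbors of $u_j$ inside $V_4$; substituting $x_a = x_b = -x_0$ gives $y_j = x_0$ for every $j$. Finally, the equation at $v_{\text{dom}}$ reads $\sum_i x_i + \sum_i y_i = -kx_0 + kx_0 = 0$, so it is automatically satisfied. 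Hence every kernel vector is a scalar multiple of $(1, -1, \ldots, -1, 1, \ldots, 1)^{T}$, which is a full vector. So $\eta(S_{2k+1}) = 1$ and $S_{2k+1}$ is a core graph, i.e., a nut graph.

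I do not anticipate any real obstacle: the dominating vertex plus the matching structure between $V_2$ and $V_4$ pin down the coordinates $x_i$ and $y_i$ almost instantly from the $w_i$- and $u_j$-equations, and the consistency of the $v_{\text{dom}}$-equation is a two-line check. The only point that deserves a moment of care is justifying that the $V_2$–$V_4$ adjacency really is a perfect matching (rather than some more exotic bipartite structure), which I would settle up front by the degree-counting remark above.
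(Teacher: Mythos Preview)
Your proof is correct and follows essentially the same approach as the paper: both solve the kernel equations vertex-by-vertex, using the $w_i$-equations to force all $u_i$-coordinates equal (to $-x_0$), then the $u_j$-equations to force all $w_j$-coordinates equal (to $x_0$), with the $v_{\text{dom}}$-equation satisfied automatically. The only difference is organizational: the paper first exhibits the full kernel vector, then redoes the computation to show every nonzero kernel vector is full and invokes Lemma~\ref{nul1}, whereas you compute the kernel in one pass and read off both nullity and fullness directly---a slightly cleaner packaging of the same argument.
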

\begin{proof}
Let \(x \in \mathbb{R}^{2k+1}\) be defined through the vertices of \(S_{2k+1}\) as:
\[
x_v = 
\begin{cases} 
+1 & \text{if } v \in V_4 \\
-1 & \text{if } v \in V_2 \\
-1 & \text{if } v = v_{\text{dom}}.
\end{cases}
\]
We claim that \(A(S_{2k+1})x = 0\), that is, For any vertex \(v \in V\), the \(v\)-th entry of the product \(A(S_{2k+1})x = \sum_{z \in N(v)} x_z\), is zero. If \(v = u_i \in V_4\), then \(N(u_i) = \{u_{i-1}, u_{i+1}, w_i, v_{\text{dom}}\}\) and he corresponding sum is:
    \[ \sum_{z \in N(u_i)} x_z = x_{u_{i-1}} + x_{u_{i+1}} + x_{w_i} + x_{v_{\text{dom}}} = (+1) + (+1) + (-1) + (-1) = 0. \]
Similarly, if \(v = w_i \in V_2\), then \(N(w_i) = \{u_i, v_{\text{dom}}\}\) and 
    \[ \sum_{z \in N(w_i)} x_z = x_{u_i} + x_{v_{\text{dom}}} = (+1) + (-1) = 0. \]
For dominating vertex $v_{\text{dom}}$ the neighborhood  \(N(v_{\text{dom}}) = V_4 \cup V_2\). Thus,     \[ \sum_{z \in N(v_{\text{dom}})} x_z = \sum_{j=1}^{n} x_{u_j} + \sum_{j=1}^{n} x_{w_j} = n \cdot (+1) + n \cdot (-1) = 0. \]
Next we prove that no nonzero vector in $N(A(S_{2k+1}))$ has a zero coordinate. Let $y \in N(A(S_{2k+1}))$, then for every vertex $v,$ we have $\sum_{z \in N(v)} y_z=0.$ For vertex \(w_i \in V_2, 1 \leq i \leq k,\) we have \(y_{u_i} + y_{v_{\text{dom}}} = 0,\) for all \(i=1, \dots, k\). Thus, 
$$y_{u_1} = y_{u_2} = \dots = y_{u_k} = -y_{v_{\text{dom}}}.$$ Letting this common value to be \(c\), we have \(y_{u_i} = c\) for all \(u_i \in V_4\) and \(y_{v_{\text{dom}}} = -c\). Next, for any vertex \(u_i \in V_4\), we have $y_{u_{i-1}} + y_{u_{i+1}} + y_{w_i} + y_{v_{\text{dom}}} = 0, ~1 \leq i \leq k.$ This simplifies to \(y_{w_i} = -c\), for all \(i=1, \dots, k\). Thus by Lemma \ref{nul1}, nullity of $A(S_{2k+1})$ is $1$.
\end{proof}

Now, we show that \(S_{2k+1}\) satisfies all the eight necessary conditions of Theorem \ref{neccon}. 

\begin{theorem}
$S_{2k+1} \in {\cal C}$.
\end{theorem}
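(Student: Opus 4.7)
The plan is to verify the eight necessary conditions of Theorem~\ref{neccon} one by one for $G = S_{2k+1}$, exploiting the dominating vertex $v_{\text{dom}}$ and the explicit kernel vector $x$ already produced in the proof of Theorem~\ref{sat_nut}. The first two items are essentially immediate: Theorem~\ref{sat_nut} establishes that $S_{2k+1}$ is a nut graph, which by definition forces every vertex to be a core vertex, so condition (1) holds. For condition (2), I would compute the pairing of $x$ with $\mathbf{e}$: by construction $x_v = +1$ on the $k$ vertices of $V_4$ and $x_v = -1$ on the $k$ vertices of $V_2$ and on $v_{\text{dom}}$, so $\langle x, \mathbf{e}\rangle = k - k - 1 = -1 \neq 0$, confirming that $0$ is a main eigenvalue.

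For conditions (3) and (4), the strategy is to note that $v_{\text{dom}}$ is adjacent to every other vertex, so any edge incident to $v_{\text{dom}}$ sits in an obvious triangle with a second neighbour. Concretely, I would exhibit the triangles $(v_{\text{dom}}, u_i, u_{i+1})$ for each edge of the $V_4$-cycle, the triangles $(v_{\text{dom}}, u_i, w_i)$ for each edge between $V_4$ and $V_2$, and observe that every edge of $S_{2k+1}$ is contained in one of these two families. This simultaneously gives condition (3) (every vertex, including each $w_i$, lies on such a triangle) and condition (4) (each edge extends to a triangle through $v_{\text{dom}}$). Condition (7) follows at once from the presence of these odd cycles.

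Conditions (5), (6) and (8) are read off directly from the structure: the degree sequence contains three distinct values $2k$, $4$, and $2$ (and $2k \geq 6 > 4$ since $k\geq 3$), so $G$ is not regular; the dominating vertex joins every pair, giving connectivity and forcing the diameter to be at most $2$; and the diameter is exactly $2$ since, for instance, $w_1$ and $w_2$ are non-adjacent. I do not anticipate a serious obstacle here, as the dominating vertex trivialises conditions (3)--(8) and the nut-graph property from Theorem~\ref{sat_nut} together with a one-line inner-product computation handles (1) and (2); the write-up is essentially bookkeeping through the list.
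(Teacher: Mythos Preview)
Your proposal is correct. The paper defers this particular proof to a later revision, but your plan mirrors exactly the argument the paper supplies for the analogous statement $E_{2k}\in\mathcal C$: conditions (1)--(2) follow from the nut-graph property and the computation $\langle x,\mathbf e\rangle=-1\neq 0$, conditions (3), (4) and (7) from the dominating vertex forcing a triangle on every edge, and (5), (6), (8) from the degree sequence together with the diameter-$2$ observation via $v_{\text{dom}}$.
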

\begin{proof}
The proof will be included in the revision to this version.
\end{proof}

Despite being a member of ${\cal C}$, \(S_{2k+1}\) is not a counter-example to the ACK conjecture. This is our next result.

\begin{theorem}\label{thm:ackholdsforsn}
\(S_{2k+1}\) satisfies the ACK conjecture.
\end{theorem}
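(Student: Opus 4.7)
The plan is to invoke Remark~\ref{rem:ackequiv}: since Theorem~\ref{sat_nut} already establishes that $S_{2k+1}$ is a nut graph, verifying the ACK conjecture reduces to exhibiting a non-empty, non-duplicate zero-sum subset $S\subseteq V(S_{2k+1})$ with respect to the explicit kernel vector $x$ constructed in the proof of Theorem~\ref{sat_nut}; recall $x_{v_{\text{dom}}}=-1$, $x_{u_i}=+1$ for $u_i\in V_4$, and $x_{w_j}=-1$ for $w_j\in V_2$.

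Before committing to a candidate, I would note that the degree sequence of $S_{2k+1}$ takes only the values $2$ (on $V_2$), $4$ (on $V_4$), and $2k$ (at $v_{\text{dom}}$). Corollary~\ref{cor:zssnqdegree} would be the cleanest route, but the most natural small zero-sum subsets have size in $\{2,4,2k\}$, so the corollary cannot be applied directly and non-duplication must be checked by hand. Fortunately, the short degree sequence means that the check is essentially immediate.

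The main step is the construction. I propose $S=\{u_i, w_j\}$ for any indices $i,j\in\{1,\ldots,k\}$. Zero-summability is automatic from the kernel values: $x_{u_i}+x_{w_j} = (+1)+(-1)=0$. Because $|S|=2$, the only rows of $A(S_{2k+1})$ that could possibly coincide with $\chi_S$ are those corresponding to the degree-$2$ vertices, i.e.\ the rows $a^{w_\ell}$, since the row $a^{v_{\text{dom}}}$ has $2k$ ones and each row $a^{u_\ell}$ has $4$ ones. But $N(w_\ell)=\{u_\ell, v_{\text{dom}}\}$, so $a^{w_\ell}$ always has a $1$ in the $v_{\text{dom}}$-coordinate, while $\chi_S$ does not. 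Hence $\chi_S\neq a^{w_\ell}$ for every $\ell$, so $S$ is non-duplicate, and Remark~\ref{rem:ackequiv} yields the ACK conjecture for $S_{2k+1}$.

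I expect no serious obstacle here: nullity one pins the kernel vector down completely (up to scaling), the degree sequence has only three values, and the absence of the dominating vertex from $S$ disposes of the only potentially conflicting class of rows. The most delicate point, if one can call it that, is just the observation that \emph{every} degree-$2$ vertex of $S_{2k+1}$ is adjacent to $v_{\text{dom}}$, which follows immediately from $v_{\text{dom}}$ being dominating.
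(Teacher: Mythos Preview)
Your argument is correct: $S=\{u_i,w_j\}$ is a zero-sum subset for the kernel vector $x$, and since every degree-$2$ row of $A(S_{2k+1})$ carries a $1$ in the $v_{\text{dom}}$-coordinate while $\chi_S$ does not, $S$ is non-duplicate and Remark~\ref{rem:ackequiv} finishes the job. The paper itself defers the proof of this theorem to a future version, so there is nothing to compare against; your approach is entirely in line with the zero-sum-subset framework of Section~\ref{secreform}, and your observation that Corollary~\ref{cor:zssnqdegree} cannot be invoked directly is also accurate (for $k=3$ every zero-sum subset has even size in $\{2,4,6\}$, each of which is a vertex degree).
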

\begin{proof}
The proof will be included in the next version.     
\end{proof}

\begin{remark}\label{comparison1}
It is pertinent to compare our results with those that were obtained, recently \cite{bera2022conjecture}, where the ACK conjecture is proved for diameter-$2$ graphs with no dominating vertex, and with exactly $2n-5$ edges, where $n$ is the number of vertices in $G$. In contrast, the satellite graphs constructed here have diameter $2$ and contain a dominating vertex.  Moreover, these graphs have nullity one and therefore do not belong to the other class of graphs considered in \cite{bera2022conjecture}, namely graphs of diameter at most $3$ and rank at most $5$. In our construction, each graph has rank $n-1$. Thus, the satellite graphs form a completely new family of graphs satisfying the ACK conjecture while lying in the class $\mathcal{C}$.
\end{remark}

\begin{definition}\label{def:evenverticesgraph} {Motivated by} the structure of satellite graphs, for integers $k= 4,5$, and $6$ we define graphs $E_{2k}$, with an even number of vertices (figure \ref{fig:custom_graph_labeled_8}).
    Here, $V$ is partitioned into four disjoint sets; a single dominating vertex, $v_{\text{dom}}$; a set of degree $4$ vertices, \(V_4 = \{u_1, u_2, \dots, u_k\}\); a set of $k-3$ degree \(2\) vertices, \(V_2 = \{w_1, w_2, \dots, w_{k-3}\}\) and two special vertices, $s_5$ and $s_6$ of degree $5$ and $6$, respectively. The edge set \(E\) is defined as follows:
    \begin{itemize}
    \item \((v_{\text{dom}}, v) \in E\), for all vertices $v \in V$.
    \item $s_5$ and $s_6$ are adjacent. The remaining edges must fulfill the degree requirements. Also,
    \begin{itemize}
    \item Each vertex in $V_4$ is adjacent to at least one other vertex in $V_4$.
    \item One vertex in $V_4$ is adjacent to both $s_5$ and $s_6$.
    \item One vertex in $V_2$ is adjacent to $s_5$.
\end{itemize}
\end{itemize}

\end{definition}

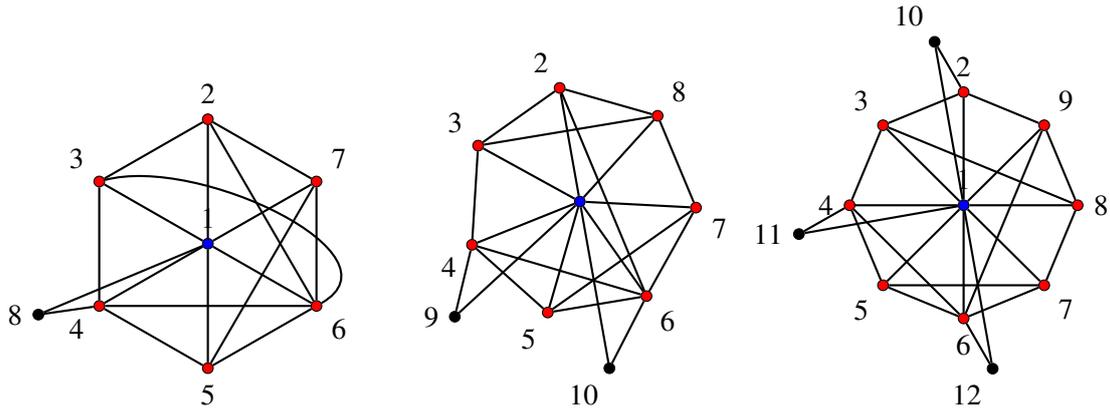
\begin{figure}[h!]
\centering
\begin{tikzpicture}[scale=1.1] 

    \tikzset{
        v_dom/.style = {circle, fill=blue, draw=black, minimum size=4pt, inner sep=0pt},
        v_4/.style   = {circle, fill=red, draw=black, minimum size=4pt, inner sep=0pt},
        v_2/.style   = {circle, fill=black, draw=black, minimum size=4pt, inner sep=0pt}
    }

    \node[v_dom, label=above:1] (vdom) at (0,0) {};

    \foreach \i in {1,...,6} {
        \pgfmathtruncatemacro{\labelnum}{\i+1} 
        \pgfmathtruncatemacro{\angle}{360/6 * (\i - 1) + 90} 
        \node[v_4, label=\angle:\labelnum] (u\i) at (\angle : 1.5cm) {};
    }

    \node[v_2, label={360/9 * (3 - 1) + 100}:8] (w3) at ({360/7 * (3 - 1) + 100} : 2.2cm) {};

    \foreach \i in {1,...,6} {
        \draw[thick] (vdom) -- (u\i);
    }
    \draw[thick] (vdom) -- (w3); 

    \draw[thick] (u1) -- (u2) -- (u3) -- (u4) -- (u5) -- (u6) -- (u1);

    \draw[thick] (u3) -- (w3);
    
    \draw[thick] (u3) -- (u5);
    \draw[thick] (u4) -- (u6);
    \draw[thick] (u1) -- (u5);
    \draw[thick] (u2) to[out=15, in=30] (u5); 

\end{tikzpicture}
\begin{tikzpicture}[scale=1.02] 

    \tikzset{
        v_dom/.style = {circle, fill=blue, draw=black, minimum size=4pt, inner sep=0pt},
        v_4/.style   = {circle, fill=red, draw=black, minimum size=4pt, inner sep=0pt},
        v_2/.style   = {circle, fill=black, draw=black, minimum size=4pt, inner sep=0pt}
    }

    \node[v_dom] (vdom) at (0,0) {};

    \foreach \i in {1,...,7} {
        \pgfmathtruncatemacro{\labelnum}{\i+1} 
        \pgfmathtruncatemacro{\angle}{360/7 * (\i - 1) + 100} 
        \node[v_4, label=\angle:\labelnum] (u\i) at (\angle : 1.5cm) {};
    }

    \node[v_2, label={360/9 * (3 - 1) + 100}:9] (w3) at ({360/7 * (3 - 1) + 120} : 2.2cm) {};
    \node[v_2, label={360/9 * (5 - 1) + 100}:10] (w5) at ({360/8 * (5 - 1) + 100} : 2.2cm) {};

    \foreach \i in {1,...,7} {
        \draw[thick] (vdom) -- (u\i);
    }
    \foreach \i in {3,5} {
        \draw[thick] (vdom) -- (w\i);
    }

    \draw[thick] (u1) -- (u2) -- (u3) -- (u4) -- (u5) -- (u6) -- (u7) -- (u1) --cycle;

    \foreach \i in {3,5} {
        \draw[thick] (u\i) -- (w\i);
    }
    
    \draw[thick] (u3) -- (u5);
    \draw[thick] (u4) -- (u6);
    \draw[thick] (u2) -- (u7);
    \draw[thick] (u1) -- (u5);
\end{tikzpicture}
\begin{tikzpicture}[scale=1] 

    \tikzset{
        v_dom/.style = {circle, fill=blue, draw=black, minimum size=4pt, inner sep=0pt},
        v_4/.style   = {circle, fill=red, draw=black, minimum size=4pt, inner sep=0pt},
        v_2/.style   = {circle, fill=black, draw=black, minimum size=4pt, inner sep=0pt}
    }

    \node[v_dom, label=above:1] (vdom) at (0,0) {};

    \foreach \i in {1,...,8} {
        \pgfmathtruncatemacro{\labelnum}{\i+1} 
        \pgfmathtruncatemacro{\angle}{360/8 * (\i - 1) + 90} 
        \node[v_4, label=\angle:\labelnum] (u\i) at (\angle : 1.5cm) {};
    }

    \node[v_2, label={360/9 * (1 - 1) + 100}:10] (w1) at ({360/8 * (1 - 1) + 100} : 2.2cm) {};
    \node[v_2, label={360/9 * (3 - 1) + 100}:11] (w3) at ({360/8 * (3 - 1) + 100} : 2.2cm) {};
    \node[v_2, label={360/9 * (5 - 1) + 100}:12] (w5) at ({360/8 * (5 - 1) + 100} : 2.2cm) {};

    \foreach \i in {1,...,8} {
        \draw[thick] (vdom) -- (u\i);
    }
    \foreach \i in {w1,w3,w5} {
        \draw[thick] (vdom) -- (\i);
    }

    \draw[thick] (u1) -- (u2) -- (u3) -- (u4) -- (u5) -- (u6) -- (u7) -- (u8) -- (u1);

    \draw[thick] (u1) -- (w1);
    \draw[thick] (u3) -- (w3);
    \draw[thick] (u5) -- (w5);
    
    \draw[thick] (u3) -- (u5);
    \draw[thick] (u4) -- (u6);
    \draw[thick] (u2) -- (u7);
    \draw[thick] (u5) -- (u8);

\end{tikzpicture}
\caption{Graph $E_{2k}$ for $k =4,5$ and $6$. }
\label{fig:custom_graph_labeled_8}
\end{figure}
\begin{theorem}
    $E_{2k},$ for $k=4,5,6$ are nut graphs. \label{subsec:Ennutgraphs}
\end{theorem}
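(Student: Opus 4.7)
The plan is to handle each of the three graphs $E_{2k}$, $k \in \{4,5,6\}$, separately, mirroring the two-step scheme used in the proof of Theorem \ref{sat_nut}. For each $k$, I would first exhibit a vector $x \in \mathbb{R}^{2k}$ with all coordinates nonzero that satisfies $A(E_{2k})x = 0$; this witnesses both the singularity of $A(E_{2k})$ and the full-support property of the constructed kernel vector. I would then show that every nonzero $y \in N(A(E_{2k}))$ has all coordinates nonzero, at which point Lemma \ref{nul1} forces $\eta(E_{2k}) = 1$ and, combined with the first step, yields the nut property.

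For the construction of $x$, I would normalize $x_{v_{\text{dom}}} = -1$ and determine the remaining coordinates by propagation through the graph. Each degree-$2$ vertex $w_i \in V_2$ is adjacent only to $v_{\text{dom}}$ and to exactly one other vertex $z_i \in V_4 \cup \{s_5\}$, so the kernel condition at $w_i$ immediately forces $x_{z_i} = +1$. The kernel conditions at the remaining vertices of $V_4$, at $s_5$, and at $s_6$ then form a small linear system in the still-undetermined coordinates (the remaining $x_{u_i}$, possibly $x_{s_5}$ or $x_{s_6}$, and the $x_{w_i}$'s), while the kernel condition at $v_{\text{dom}}$ contributes the global balance equation that the sum of all other coordinates vanishes. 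For each configuration in Figure \ref{fig:custom_graph_labeled_8}, this system is small enough to be solved explicitly by back-substitution, and a direct verification shows that the unique (up to scalar) solution has all coordinates nonzero.

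To establish the nullity claim, I would run the same propagation in reverse. Given $y \in N(A(E_{2k}))$ and setting $c := y_{v_{\text{dom}}}$, the kernel equations at the $w_i$'s, then at the $u_i \in V_4$, and finally at $s_5$ and $s_6$ express every coordinate of $y$ as an explicit scalar multiple of $c$; the kernel equation at $v_{\text{dom}}$ then follows as a consequence of the others. Hence every nonzero kernel vector is a scalar multiple of $x$, has full support, and by Lemma \ref{nul1} we conclude $\eta(E_{2k}) = 1$.

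The main obstacle is the case analysis, since the internal edges of $V_4$ and the adjacencies involving $s_5, s_6$ differ across $k = 4, 5, 6$, so the propagation must be carried out separately for each graph in Figure \ref{fig:custom_graph_labeled_8}. The naive symmetric ansatz that worked for satellite graphs (values $\pm 1$ on $V_4 \cup V_2$) no longer closes, because of the asymmetry created by $s_5$ and $s_6$; however, since each graph has only a handful of unknowns once the values forced by the $w_i$'s are fixed, the resulting linear systems admit explicit back-substitution, and the only substantive bookkeeping is to check that each propagation step produces a nonzero value at every coordinate.
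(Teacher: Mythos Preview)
Your plan is correct and in fact more complete than the paper's own argument. The paper simply writes down, for each of the three graphs, an explicit full-support kernel vector (e.g.\ $x = (1, 1, -1, -1, -1, -1, 1, 2)^T$ for $E_8$) together with the corresponding row-dependence relation among the $a^i$, and stops there; it does not separately verify that the nullity is exactly one, presumably treating this as a direct rank computation on a small explicit matrix. Your proposal to additionally run the propagation argument in the style of Theorem~\ref{sat_nut}, showing that every coordinate of an arbitrary $y \in N(A(E_{2k}))$ is forced by $c = y_{v_{\text{dom}}}$, makes the nullity-one claim self-contained and structural rather than computational. Once that is done, the appeal to Lemma~\ref{nul1} is actually superfluous (the propagation already gives $\dim N(A) \le 1$ directly), but it does no harm. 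Your acknowledgement that the asymmetry introduced by $s_5$ and $s_6$ forces a separate back-substitution for each $k \in \{4,5,6\}$ is accurate, and for graphs of order $8$, $10$, $12$ this is routine.
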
 
\begin{proof}
We identify a non-trivial vector $x\in \mathbb{R}^{2k}$ in $N(A(E_{2k}))$. 
For the 8-vertex graph, 
\[x = (1, 1, -1, -1, -1, -1, 1, 2)^T,\] 
{due to the fact that} 
    \[a^1 + a^2 + 2a^8 + a^7 - a^3 - a^4 - a^5 - a^6 = 0.\]\\
For the 10-vertex graph, 
\[x = (1, -1, -1, -1, -1, -1, 1, 1, 2, 1)^T,\] 
since
\[a^1 + a^7 + a^8 + 2a^9 + a^{10} - a^2 - a^3 - a^4 - a^5 - a^6 = 0,\]
while, for the 12-vertex graph, 
\[x = (1, -1, -1, -1, -1, -1, 1, 1, -1, 1, 2, 1)^T,\] 
{due} to the relation:

\[a^1 + a^7 + a^8 + 2a^{11} + a^{10} + a^{12} - a^2 - a^3 - a^4 - a^5 - a^6 - a^9 = 0.\]

    
\end{proof}

\begin{theorem}
{$E_{2k} \in \mathcal{C}$, $k=4,5,6$.}
\end{theorem}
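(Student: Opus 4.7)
The plan is to verify each of the eight necessary conditions listed in Theorem \ref{neccon} for the three graphs $E_{2k}$, $k=4,5,6$, exploiting the common structural feature that $v_{\text{dom}}$ is a dominating vertex. Several of the conditions follow from the construction essentially for free, while the ones requiring genuine verification are the main-eigenvalue condition and the triangle conditions.

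First I would handle the ``easy'' conditions in one stroke. Connectedness is immediate since $v_{\text{dom}}$ is adjacent to every other vertex, and this same adjacency forces the diameter to be $2$ (any two non-adjacent vertices are joined by a path of length $2$ through $v_{\text{dom}}$), settling conditions $(6)$ and $(8)$. The degree sequence of $E_{2k}$ explicitly includes vertices of degrees $2$, $4$, $5$, $6$, and $2k-1$ (the dominating vertex), so $E_{2k}$ is non-regular, giving $(5)$. For non-bipartiteness $(7)$, note that for any edge $\{u,w\}$ in $E_{2k}$ with $u,w \neq v_{\text{dom}}$, the set $\{u,w,v_{\text{dom}}\}$ is a triangle; since such edges exist by construction, $E_{2k}$ contains an odd cycle. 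Condition $(1)$, the core graph property, follows immediately from Theorem \ref{subsec:Ennutgraphs}, since every nut graph is by definition a core graph.

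Next I would verify that $0$ is a main eigenvalue, condition $(2)$. For this it suffices to check that the explicit kernel vectors produced in the proof of Theorem \ref{subsec:Ennutgraphs} are not orthogonal to $\mathbf{e}$. A direct summation gives $\langle x,\mathbf{e}\rangle = 1$ in all three cases ($k=4,5,6$), since each vector has one unbalanced coordinate (the entry $2$) that is not cancelled by the $\pm 1$ pattern. Since the nullity is one, every kernel vector is a scalar multiple of this $x$, so no nonzero kernel vector is orthogonal to $\mathbf{e}$, establishing $(2)$.

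The remaining conditions are the two triangle conditions $(3)$ and $(4)$. Here the dominating vertex again does most of the work. For condition $(4)$, fix a vertex $u$ and a neighbor $w$ of $u$. If $w \neq v_{\text{dom}}$, then $v_{\text{dom}}$ is adjacent to both $u$ and $w$, so $\{u,w,v_{\text{dom}}\}$ is a triangle containing $u$ and $w$. If $w = v_{\text{dom}}$, then we need some $z$ adjacent to both $u$ and $v_{\text{dom}}$; any neighbor $z$ of $u$ other than $v_{\text{dom}}$ suffices, and by inspection of Definition \ref{def:evenverticesgraph} every vertex of $E_{2k}$ has at least one such neighbor (vertices in $V_4$ are adjacent to other vertices in $V_4$; vertices in $V_2$ are adjacent to a vertex in $V_4$ or to $s_5$; and $s_5,s_6$ are adjacent to each other and to vertices of $V_4$). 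Condition $(3)$ is a direct consequence of $(4)$ applied to any neighbor of $u$. The only part that requires genuine case-checking is the verification, for each of the three specific graphs depicted in Figure \ref{fig:custom_graph_labeled_8}, that every vertex does have a neighbor distinct from $v_{\text{dom}}$; this is the main (but entirely mechanical) obstacle, and I would dispose of it by reading the edge lists off the figures.
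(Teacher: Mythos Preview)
Your proposal is correct and follows essentially the same approach as the paper: both verify the eight conditions of Theorem \ref{neccon} directly, with the dominating vertex handling the triangle and diameter conditions and Theorem \ref{subsec:Ennutgraphs} supplying the nut-graph (hence core) property and the kernel vector for the main-eigenvalue check. If anything, your version is slightly more careful than the paper's, since you explicitly compute $\langle x,\mathbf{e}\rangle = 1$ in each case and you separately treat the sub-case $w = v_{\text{dom}}$ in condition~(4), which the paper's argument glosses over.
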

\begin{proof}
{$E_{2k}$ for $k=4,5,6,$ by definition, are nut graphs, so that they are core graphs. Thus, the first condition of Theorem \ref{neccon} is satisfied.} {If $x$ is the vector as given in the proof of Theorem \ref{subsec:Ennutgraphs}, we then have $\langle x, {\bf e} \rangle \neq 0,$ so that the second condition of Theorem \ref{neccon} holds.} The construction of each $E_{2k}$ includes a dominating vertex, $v_{dom}$ and for any arbitrary vertex $u$ and any of its neighbors $w$, both $u$ and $w$ are adjacent to $v_{dom}$, forming the triangle $\{u, w, v_{dom}\}$. In fact, for every vertex $u$, each neighbor in $N(u)$ forms a triangle containing $u$, {showing} that conditions 3 and 4 of Theorem \ref{neccon} hold.

{Clearly, these graphs are not regular, are connected and are not bipartite. Finally, the dominating vertex guarantees that the path length between any two non-adjacent vertices is $2$, meaning the diameter is exactly 2. This shows that the last four conditions of Theorem \ref{neccon} are also satisfied, completing the proof.}
\end{proof}

\noindent The following result is a direct consequence of Corollary \ref{cor:zssnqdegree}.
\begin{corollary}\label{acknut}
The ACK conjecture holds for the nut graphs $E_{2k}$ for $k=4,5,6$.    
\end{corollary}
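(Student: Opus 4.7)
My plan is to reduce the statement to Corollary \ref{cor:zssnqdegree}, as hinted by the paper. Since by Theorem \ref{subsec:Ennutgraphs} each $E_{2k}$ is a nut graph, it has nullity exactly one; by that corollary, it then suffices to exhibit, for each $k\in\{4,5,6\}$, a non-empty zero-sum subset $S\subseteq V(E_{2k})$ (relative to a nonzero kernel vector) whose cardinality does not occur in the degree multiset of $E_{2k}$.

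First, I would read off the degree sequence directly from Definition \ref{def:evenverticesgraph}: the dominating vertex $v_{\text{dom}}$ has degree $2k-1$, each vertex of $V_4$ has degree $4$, each vertex of $V_2$ has degree $2$, and $s_5$, $s_6$ have degrees $5$ and $6$, respectively. Thus the set of distinct vertex degrees is $\{2,4,5,6,2k-1\}$, which specializes to $\{2,4,5,6,7\}$, $\{2,4,5,6,9\}$, and $\{2,4,5,6,11\}$ for $k=4,5,6$. In particular, $3$ is never a vertex degree. Next, I would invoke the explicit kernel vectors written down in the proof of Theorem \ref{subsec:Ennutgraphs}. Each of them contains exactly one coordinate equal to $+2$ and several coordinates equal to $-1$, so pairing the vertex at the $+2$ coordinate with any two vertices whose coordinates equal $-1$ yields a zero-sum subset of size $3$. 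For example, for $E_8$ with $x=(1,1,-1,-1,-1,-1,1,2)^T$, taking $S=\{v_3,v_4,v_8\}$ gives $x_3+x_4+x_8=-1-1+2=0$, and completely analogous triples such as $\{v_2,v_3,v_9\}$ in $E_{10}$ and $\{v_2,v_3,v_{11}\}$ in $E_{12}$ also sum to zero.

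Because $|S|=3$ is not a vertex degree, the characteristic vector $\chi_S$ cannot coincide with any row of $A_{E_{2k}}$, so $S$ is non-duplicate; Corollary \ref{cor:zssnqdegree} then immediately delivers the ACK conjecture for each $E_{2k}$ with $k\in\{4,5,6\}$. The only real obstacle is the routine bookkeeping of the degree sequence from Definition \ref{def:evenverticesgraph}; producing the zero-sum triple itself is immediate once one notices the lone $+2$ coordinate in each kernel vector.
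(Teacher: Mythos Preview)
Your proposal is correct and follows exactly the approach of the paper: the paper's one-line proof simply observes that each $E_{2k}$ has no vertex of degree $3$ and admits a zero-sum subset of size $3$, then invokes Corollary~\ref{cor:zssnqdegree}. You have merely filled in the bookkeeping (the explicit degree set $\{2,4,5,6,2k-1\}$ and the specific zero-sum triples obtained by pairing the lone $+2$ coordinate with two $-1$ coordinates), which is entirely consistent with the paper's argument.
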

\begin{proof}
{Each $E_{2k}$ has no vertex of degree $3$ and has a zero-sum subset of size $3$.} 
\end{proof}

\begin{remark}\label{rem:k7failure}
The construction method for graphs as in Definition \ref{def:evenverticesgraph}, can be extended for integers $k \ge 7$ by following the same degree sequence. However, although the resulting graphs $E_{2k}$ may have a nullity of one, they need not be nut graphs. In particular, for $k=7$, the 14-vertex graph illustrated in Figure~\ref{fig:custom_graph_labeled_14}. This example shows that the construction does not, in general, produce graphs in the class $\mathcal{C}$ for $k \ge 7$. Nevertheless, for $k=8$ and $9$ by following the same construction and additionally adding one or two edges, one can obtain graphs that satisfy the ACK conjecture. These graphs, demonstrated in Figure \ref{fig:F18c}, however, do not lie in $\mathcal{C}$, since they are not nut graphs and $0$ is not a main eigenvalue. 
\end{remark}

\begin{remark}
Recall that, in \cite{bera2022conjecture}, the ACK conjecture is established for graphs of diameter at least four, as well as for certain graphs of diameter two or three whose structure admits pairs of adjacent vertices with disjoint neighbourhoods or arises from vertex-duplication procedures. In contrast, the graphs considered here have diameter two, contain a dominating vertex (a case not previously addressed), and every pair of adjacent vertices has a common neighbour. Consequently, the even-order graphs here, satisfy the ACK conjecture while not belonging to any of the graph families studied earlier.  
\end{remark}
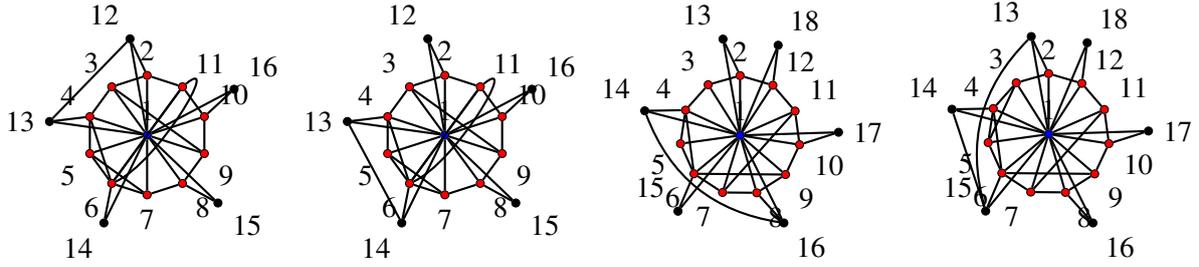
\begin{figure}[t]
\begin{tikzpicture}[scale=.72] 

    \tikzset{
    vertex/.style = {circle, draw=black, minimum size=3pt, inner sep=0pt},
    v_dom/.style  = {vertex, fill=blue},
    v_4/.style    = {vertex, fill=red},
    v_2/.style    = {vertex, fill=black}
}

    \node[v_dom, label=above:1] (vdom) at (0,0) {};

    \foreach \i in {1,...,10} {
        \pgfmathtruncatemacro{\labelnum}{\i+1} 
        \pgfmathtruncatemacro{\angle}{360/10 * (\i - 1) + 90} 
        \node[v_4, label=\angle:\labelnum] (u\i) at (\angle : 1.1cm) {};
    }

    \node[v_2, label={360/10 * (1 - 1) + 100}:12] (w1) at ({360/10 * (1 - 1) + 100} : 1.8cm) {};
    \node[v_2, label={360/9 * (3 - 1) + 100}:13] (w3) at ({360/10 * (3 - 1) + 100} : 1.8cm) {};
    \node[v_2, label={360/9 * (5 - 1) + 100}:14] (w5) at ({360/10 * (5 - 1) + 100} : 1.8cm) {};
    \node[v_2, label={360/9 * (7 - 1) + 100}:15] (w7) at ({360/10 * (7 - 1) + 100} : 1.8cm) {};
    \node[v_2, label={360/10 * (9 - 1) + 100}:16] (w9) at ({360/10 * (9 - 1) + 100} : 1.8cm) {};

    \foreach \i in {1,...,10} {
        \draw[thick] (vdom) -- (u\i);
    }
    \foreach \i in {w1,w3,w5,w7,w9} {
        \draw[thick] (vdom) -- (\i);
    }

    \draw[thick] (u1) -- (u2) -- (u3) -- (u4) -- (u5) -- (u6) -- (u7) -- (u8) -- (u9) --(u10) -- (u1);

    \draw[thick] (u1) -- (w1);
    \draw[thick] (u3) -- (w3);
    \draw[thick] (u5) -- (w5);
    \draw[thick] (u7) -- (w7);
    \draw[thick] (u9) -- (w9);
    
    \draw[thick] (u2) -- (u8);
    \draw[thick] (u3) -- (u5);
    \draw[thick] (u4) -- (u6);
    \draw[thick] (u5) -- (u10);
    \draw[thick] (u5) to[out=25, in=40] (u10); 
    \draw[thick] (w1) -- (w3);
\end{tikzpicture}
\begin{tikzpicture}[scale=.72] 
\tikzset{
    vertex/.style = {circle, draw=black, minimum size=3pt, inner sep=0pt},
    v_dom/.style  = {vertex, fill=blue},
    v_4/.style    = {vertex, fill=red},
    v_2/.style    = {vertex, fill=black}
}

    \node[v_dom, label=above:1] (vdom) at (0,0) {};

    \foreach \i in {1,...,10} {
        \pgfmathtruncatemacro{\labelnum}{\i+1} 
        \pgfmathtruncatemacro{\angle}{360/10 * (\i - 1) + 90} 
        \node[v_4, label=\angle:\labelnum] (u\i) at (\angle : 1.1cm) {};
    }

    \node[v_2, label={360/10 * (1 - 1) + 100}:12] (w1) at ({360/10 * (1 - 1) + 100} : 1.8cm) {};
    \node[v_2, label={360/9 * (3 - 1) + 100}:13] (w3) at ({360/10 * (3 - 1) + 100} : 1.8cm) {};
    \node[v_2, label={360/9 * (5 - 1) + 100}:14] (w5) at ({360/10 * (5 - 1) + 100} : 1.8cm) {};
    \node[v_2, label={360/9 * (7 - 1) + 100}:15] (w7) at ({360/10 * (7 - 1) + 100} : 1.8cm) {};
    \node[v_2, label={360/10 * (9 - 1) + 100}:16] (w9) at ({360/10 * (9 - 1) + 100} : 1.8cm) {};

    \foreach \i in {1,...,10} {
        \draw[thick] (vdom) -- (u\i);
    }
    \foreach \i in {w1,w3,w5,w7,w9} {
        \draw[thick] (vdom) -- (\i);
    }

    \draw[thick] (u1) -- (u2) -- (u3) -- (u4) -- (u5) -- (u6) -- (u7) -- (u8) -- (u9) --(u10) -- (u1);

    \draw[thick] (u1) -- (w1);
    \draw[thick] (u3) -- (w3);
    \draw[thick] (u5) -- (w5);
    \draw[thick] (u7) -- (w7);
    \draw[thick] (u9) -- (w9);
    
    \draw[thick] (u2) -- (u8);
    \draw[thick] (u3) -- (u5);
    \draw[thick] (u4) -- (u6);
    \draw[thick] (u5) -- (u10);
    \draw[thick] (u5) to[out=25, in=40] (u10); 
    \draw[thick] (w5) -- (w3);
\end{tikzpicture}
\begin{tikzpicture}[scale=.72] 

\tikzset{
    vertex/.style = {circle, draw=black, minimum size=3pt, inner sep=0pt},
    v_dom/.style  = {vertex, fill=blue},
    v_4/.style    = {vertex, fill=red},
    v_2/.style    = {vertex, fill=black}
}

    \node[v_dom, label=above:1] (vdom) at (0,0) {};

    \foreach \i in {1,...,11} {
        \pgfmathtruncatemacro{\labelnum}{\i+1} 
        \pgfmathtruncatemacro{\angle}{360/11 * (\i - 1) + 90} 
        \node[v_4, label=\angle:\labelnum] (u\i) at (\angle : 1.1cm) {};
    }

    \node[v_2, label={360/11 * (1 - 1) + 100}:13] (w1) at ({360/10 * (1 - 1) + 100} : 1.8cm) {};
    \node[v_2, label={360/11 * (3 - 1) + 90}:14] (w3) at ({360/11 * (3 - 1) + 100} : 1.8cm) {};
    \node[v_2, label={360/3 * (5 - 1) + 10}:15] (w5) at ({360/11 * (5 - 1) + 100} : 1.8cm) {};
    \node[v_2, label={360/11 * (7 - 1) + 100}:16] (w7) at ({360/11 * (7 - 1) + 100} : 1.8cm) {};
    \node[v_2, label={360/11 * (9 - 1) + 100}:17] (w9) at ({360/11 * (9 - 1) + 100} : 1.8cm) {};
    \node[v_2, label={360/11 * (11 - 1) + 100}:18] (w11) at ({360/11 * (11 - 1) + 100} : 1.8cm) {};

    \foreach \i in {1,...,11} {
        \draw[thick] (vdom) -- (u\i);
    }
    \foreach \i in {w1,w3,w5,w7,w9,w11} {
        \draw[thick] (vdom) -- (\i);
    }

    \draw[thick] (u1) -- (u2) -- (u3) -- (u4) -- (u5) -- (u6) -- (u7) -- (u8) -- (u9) --(u10) -- (u11) -- (u1);

    \draw[thick] (u1) -- (w1);
    \draw[thick] (u3) -- (w3);
    \draw[thick] (u5) -- (w5);
    \draw[thick] (u7) -- (w7);
    \draw[thick] (u9) -- (w9);
    \draw[thick] (u11) -- (w11);
    
    \draw[thick] (u5) -- (u8);
    \draw[thick] (u3) -- (u5);
    \draw[thick] (u6) -- (u10);
    \draw[thick] (w3) to[bend right=30] (w7);
\end{tikzpicture}
\begin{tikzpicture}[scale=.73] 
\tikzset{
    vertex/.style = {circle, draw=black, minimum size=3pt, inner sep=1pt},
    v_dom/.style  = {vertex, fill=blue},
    v_4/.style    = {vertex, fill=red},
    v_2/.style    = {vertex, fill=black}
}

    \node[v_dom, label=above:1] (vdom) at (0,0) {};

    \foreach \i in {1,...,11} {
        \pgfmathtruncatemacro{\labelnum}{\i+1} 
        \pgfmathtruncatemacro{\angle}{360/11 * (\i - 1) + 90} 
        \node[v_4, label=\angle:\labelnum] (u\i) at (\angle : 1.1cm) {};
    }

    \node[v_2, label={360/11 * (1 - 1) + 100}:13] (w1) at ({360/10 * (1 - 1) + 100} : 1.8cm) {};
    \node[v_2, label={360/11 * (3 - 1) + 90}:14] (w3) at ({360/11 * (3 - 1) + 100} : 1.8cm) {};
    \node[v_2, label={360/3 * (5 - 1) + 10}:15] (w5) at ({360/11 * (5 - 1) + 100} : 1.8cm) {};
    \node[v_2, label={360/11 * (7 - 1) + 100}:16] (w7) at ({360/11 * (7 - 1) + 100} : 1.8cm) {};
    \node[v_2, label={360/11 * (9 - 1) + 100}:17] (w9) at ({360/11 * (9 - 1) + 100} : 1.8cm) {};
    \node[v_2, label={360/11 * (11 - 1) + 100}:18] (w11) at ({360/11 * (11 - 1) + 100} : 1.8cm) {};

    \foreach \i in {1,...,11} {
        \draw[thick] (vdom) -- (u\i);
    }
    \foreach \i in {w1,w3,w5,w7,w9,w11} {
        \draw[thick] (vdom) -- (\i);
    }

    \draw[thick] (u1) -- (u2) -- (u3) -- (u4) -- (u5) -- (u6) -- (u7) -- (u8) -- (u9) --(u10) -- (u11) -- (u1);

    \draw[thick] (u1) -- (w1);
    \draw[thick] (u3) -- (w3);
    \draw[thick] (u5) -- (w5);
    \draw[thick] (u7) -- (w7);
    \draw[thick] (u9) -- (w9);
    \draw[thick] (u11) -- (w11);
    
    \draw[thick] (u5) -- (u8);
    \draw[thick] (u3) -- (u5);
    \draw[thick] (u2) -- (u4);
    \draw[thick] (u6) -- (u10);
    \draw[thick] (w1) to[bend right=30] (w5);
    \draw[thick] (w3) -- (w5);
\end{tikzpicture}
\caption{The first two graphs have degree sequence 
$\{2,2,2,3,3,4,4,4,4,4,4,4,4,5,6,15\}$ with dependence relation 
$a^7+a^9+a^{12}+a^{14}-a^4-a^{11}-2a^{15}=0$. 
The next two graphs have degree sequences 
$\{2,2,2,2,3,3,3,3,4,4,4,4,4,4,4,5,6,17\}$ and 
$\{2,2,2,3,3,4,4,4,4,4,4,4,4,4,4,5,6,17\}$, 
with dependence relations 
$a^3-a^5+a^{15}-a^{13}=0$ and 
$a^4-a^5+a^6-a^8+a^9-2a^3+a^{12}-a^{14}-a^{15}-a^{16}+2a^{18}=0$, respectively.
}
\label{fig:F18c}
\end{figure}

\section{Two graph operations and the ACK conjecture}\label{sec:graphopertn}

In this section, we consider the cartesian product of a graph with $K_2$. In the presence of a condition on the adjacency matrix of the given graph, the  new graph thus obtained satisfies the ACK conjecture.

Recall that the cartesian product $G_1 \square G_2$ of two graphs $G_1$ and $G_2$ is the graph with vertex set $V(G_1)\times V(G_2),$ where $(u_1,u_2)$ is adjacent to $(v_1,v_2)$ if and only if either
$u_1=v_1$ and $u_2\sim v_2$ in $G_2$, or $u_2=v_2$ and $u_1\sim v_1$ in $G_1$ \cite{brouwer2011spectra}.
In particular, when $G_1=K_2$ whose vertex set is labelled $\{0,1\}$, the graph $K_2 \square G_2$ consists of two copies of $G_2$, with an edge between $(0,u)$ and $(1,u)$ for each $u\in V(G_2)$. We use $\sigma(X)$ to denote the spectrum of the matrix $X$, namely its eigenvalues.
 
\begin{theorem}\label{thm:concart}
For a graph $H$, suppose that $1$ is a simple eigenvalue of $A_H$, and $-1$ is not an eigenvalue. Then $G = K_2 \square H$ satisfies the ACK conjecture.
\end{theorem}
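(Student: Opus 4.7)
The plan is to exploit the block structure of $A_G$ to identify $N(A_G)$ explicitly, and then invoke Proposition~\ref{prop:ackequivgeneral} via a characteristic vector that is ``symmetric'' across the two copies of $H$ and therefore cannot coincide with any row of $A_G$.

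Writing the adjacency matrix of $G=K_2\square H$ in block form as
$$A_G=\begin{pmatrix} A_H & I \\ I & A_H \end{pmatrix},$$
one checks by direct computation that if $A_H v=\lambda v$, then $\binom{v}{v}$ and $\binom{v}{-v}$ are eigenvectors of $A_G$ with eigenvalues $\lambda+1$ and $\lambda-1$, respectively. Hence $\sigma(A_G)=\{\mu\pm 1:\mu\in\sigma(A_H)\}$, counted with multiplicity. The hypotheses that $1$ is a simple eigenvalue of $A_H$ and $-1\notin\sigma(A_H)$ therefore yield $\eta(G)=1$, with $N(A_G)$ spanned by $\binom{v}{-v}$, where $v$ is the unique (up to scaling) eigenvector of $A_H$ associated with the eigenvalue $1$.

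With this in hand, by Proposition~\ref{prop:ackequivgeneral} (cf.\ Remark~\ref{rem:ackequiv}) it suffices to produce a non-empty, non-duplicate subset $S\subseteq V(G)$ whose characteristic vector lies in $N(A_G)^{\perp}$. Label the vertex set as $V(G)=\{0,1\}\times V(H)$, pick any non-empty $T\subseteq V(H)$, and set $S=\{0,1\}\times T$, so that $\chi_S=\binom{\chi_T}{\chi_T}$. Orthogonality with $\binom{v}{-v}$ is immediate from $\chi_T^{T} v-\chi_T^{T} v=0$, so $\chi_S\in R(A_G)$.

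The one remaining step is the non-duplication check. Every row of $A_G$ has the form $\binom{a^u_H}{e_u}$ or $\binom{e_u}{a^u_H}$ for some $u\in V(H)$, where $a^u_H$ is the adjacency vector of $u$ in $H$ and $e_u$ is the standard basis vector. If $\chi_S$ coincided with such a row, then we would need $a^u_H=e_u$; but this is impossible in a simple graph, since $a^u_H$ has a $0$ in coordinate $u$ while $e_u$ has a $1$ there. Hence $S$ is a non-empty, non-duplicate subset with $\chi_S\in N(A_G)^{\perp}$, and $G$ satisfies the ACK conjecture. The argument is structural and I foresee no substantive obstacle; the only subtle point is the non-duplication verification, which is precisely where the absence of loops in $H$ enters essentially, with everything else reducing to routine linear algebra on the block decomposition of $A_G$.
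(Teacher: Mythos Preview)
Your argument is correct. The paper's own proof of Theorem~\ref{thm:concart} is deferred to a future revision, but the surrounding material (the Corollary immediately following it and Example~\ref{carprodex}) makes clear that the authors use exactly the same block decomposition $A_G=\begin{pmatrix}A_H & I\\ I & A_H\end{pmatrix}$, the same spectral identification $\sigma(A_G)=\{\mu\pm 1:\mu\in\sigma(A_H)\}$, and the same kernel vector $[v^T,-v^T]^T$; your proof is thus in line with the intended approach, and your choice of the symmetric subset $S=\{0,1\}\times T$ together with the clean non-duplication check via $a^u_H\neq e_u$ completes the argument precisely in the spirit of Proposition~\ref{prop:ackequivgeneral}.
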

\begin{proof}
The proof will be included in the revision to this version.
\end{proof}

\begin{remark}
The conclusion of Theorem \ref{thm:concart} also holds if the roles of the eigenvalues are interchanged, that is, if $-1$ is a simple eigenvalue of $A_H$ and $1$ is not. Also, $\operatorname{diam}(G)\le 3$ if and only if $\operatorname{diam}(H)\le 2$.
\end{remark}
\begin{remark}
Odd cycles $H:=C_{2k+1},~k\ge 4$ with $2k+1 \equiv 0 \pmod{3}$ provide a natural infinite family of graphs for which $-1$ is a simple eigenvalue of $A_H$ and $1 \notin \sigma(A_H)$. In fact, the spectrum of $C_n$ is explicitly given by 
$$
\sigma(C_n)=\left\{2\cos\!\left(\frac{2\pi j}{n}\right): j=0,1,\dots,n-1\right\},
$$
from which, the assertion follows. However, we do not know of any graph--theoretic characterization for these conditions. In particular, many other graphs, including irregular and asymmetric ones, also satisfy this condition. In the next example, one such  illustration is provided.
\end{remark}

\begin{example}\label{carprodex}
Let $H$ be the graph formed on vertices $\{1,2,3,4,5\}$ with the edge set
\[
E(H)=\{\{1,3\},\{1,4\},\{1,5\},\{2,3\},\{2,4\},\{3,4\}\}.
\]
The adjacency matrix of $H$ is
\[
A_H=\begin{pmatrix}
0 & 0 & 1 & 1 & 1 \\
0 & 0 & 1 & 1 & 0 \\
1 & 1 & 0 & 1 & 0 \\
1 & 1 & 1 & 0 & 0 \\
1 & 0 & 0 & 0 & 0
\end{pmatrix},
\]
with a simple eigenvalue $-1$ and $1$ is not an eigenvalue.  The kernel eigenvector for $\lambda = -1$ is $v = [0, 0, -1, 1, 0]^T$.
We construct the graph $G=K_2\square H$, as illistrated in Figure \ref{fig:conscart}. The adjacency matrix of $G$ has the block structure
\[
A_G=\begin{pmatrix}
A_H & I_5 \\
I_5 & A_H
\end{pmatrix},
\]
where $I_5$ is the $5\times 5$ identity matrix. By Theorem \ref{thm:concart}, $G$ satisfies the ACK conjecture.
\end{example}
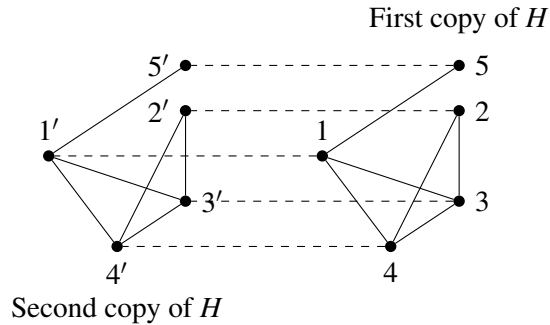
\begin{figure}[h!]
\centering
\begin{tikzpicture}[scale=1.2]
\tikzstyle{v} = [circle, fill=black, inner sep=1.5pt, outer sep=0pt]

\begin{scope}[xshift=3cm]
\node[v] (1a) at (0,0) [label=above:1] {};
\node[v] (2a) at (1.5,0.5) [label=right:2] {};
\node[v] (3a) at (1.5,-0.5) [label=right:3] {};
\node[v] (4a) at (0.75,-1) [label=below:4] {};

\node[v] (5a) at (1.5,1) [label=right:5] {};

\draw (1a) -- (3a);
\draw (1a) -- (4a);
\draw (2a) -- (3a);
\draw (2a) -- (4a);
\draw (3a) -- (4a);

\draw (1a) -- (5a);

\node at (1.5,1.5) {First copy of $H$};
\end{scope}

\begin{scope}[xshift=0cm]
\node[v] (1b) at (0,0) [label=above:1$'$] {};
\node[v] (2b) at (1.5,0.5) [label=left:2$'$] {};
\node[v] (3b) at (1.5,-0.5) [label=right:3$'$] {};
\node[v] (4b) at (0.75,-1) [label=below:4$'$] {};

\node[v] (5b) at (1.5,1) [label=left:5$'$] {};

\draw (1b) -- (3b);
\draw (1b) -- (4b);
\draw (2b) -- (3b);
\draw (2b) -- (4b);
\draw (3b) -- (4b);

\draw (1b) -- (5b);

\node at (0.75,-1.7) {Second copy of $H$};
\end{scope}

\draw[dashed] (1a) -- (1b);
\draw[dashed] (2a) -- (2b);
\draw[dashed] (3a) -- (3b);
\draw[dashed] (4a) -- (4b);
\draw[dashed] (5a) -- (5b);


\end{tikzpicture}
\caption{Graph $G = K_2 \square H$}
\label{fig:conscart}
\end{figure}

We obtain an immediate consequence of Theorem \ref{thm:concart}.

\begin{corollary}
Suppose that $1$ is a simple eigenvalue of $A_H$ with a corresponding full eigenvector $v$, and $-1$ is not an eigenvalue of $A_H$. Then $K_2 \square H$ satisfies the ACK conjecture.
\end{corollary}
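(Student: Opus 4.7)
The plan is to exploit the extra hypothesis that $v$ is full to give a direct, constructive argument rather than merely invoking Theorem~\ref{thm:concart}, and in passing to observe that $G$ is in fact a nut graph. By Remark~\ref{rem:ackequiv}, once I establish that $G$ has nullity one, it suffices to exhibit a single nonempty, non-duplicate zero-sum subset $S \subseteq V(G)$.

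First, I would use the block form
\[
A_G = \begin{pmatrix} A_H & I_n \\ I_n & A_H \end{pmatrix}
\]
together with the standard spectral identity $\sigma(A_{K_2 \square H}) = \{\mu \pm 1 : \mu \in \sigma(A_H)\}$ to pin down the kernel of $A_G$. Under the hypotheses, $1$ is a simple eigenvalue of $A_H$ and $-1 \notin \sigma(A_H)$, so $0$ is a simple eigenvalue of $A_G$ with $N(A_G) = \operatorname{span}\bigl\{(v,-v)^\top\bigr\}$; fullness of $v$ then forces every entry of this kernel vector to be nonzero, making $G$ a nut graph.

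For the non-duplicate zero-sum subset, I would pick any nonempty $T \subseteq V(H)$ and set $S = (T \times \{0\}) \cup (T \times \{1\})$, whose characteristic vector is $\chi_S = (\chi_T, \chi_T)^\top$. Orthogonality to the kernel vector is automatic, since $\langle \chi_S, (v,-v)^\top \rangle = \chi_T^\top v - \chi_T^\top v = 0$. To verify non-duplication, I would observe that the row of $A_G$ indexed by $(0,u)$ has the form $(a_H^u, e_u)$, where $e_u$ is a standard basis vector; equality with $\chi_S$ would force $\chi_T = e_u$ and $\chi_T = a_H^u$ simultaneously, giving $T = \{u\} = N_H(u)$, which is impossible for the simple graph $H$. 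Rows indexed by $(1,u)$ are excluded symmetrically, so $S$ is the required subset and the ACK conjecture holds for $G$. No serious obstacle is anticipated; the only care needed is the non-duplicate verification, which is immediate from the block structure of $A_G$.
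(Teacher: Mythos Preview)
Your argument is correct. The paper's route is different and shorter: since the corollary is stated as an ``immediate consequence of Theorem~\ref{thm:concart},'' the ACK conclusion is obtained by simply invoking that theorem (whose hypotheses are a subset of the present ones), and the displayed proof only uses the extra fullness assumption to record the additional observation that $G$ is in fact a nut graph. You instead bypass Theorem~\ref{thm:concart} entirely: after establishing, as the paper does, that $G$ has nullity one with kernel spanned by the full vector $(v,-v)^{\top}$, you produce an explicit non-duplicate zero-sum subset $S = (T\times\{0\})\cup(T\times\{1\})$ and verify directly from the block structure $(a_H^u,\,e_u)$ of the rows that $\chi_S$ cannot be a row of $A_G$. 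Your approach is more self-contained (it does not rely on a theorem whose proof is deferred in the paper) and illustrates concretely how the symmetric doubling in $K_2\square H$ manufactures zero-sum subsets; the paper's approach is terser and shows that the fullness hypothesis is not actually needed for the ACK conclusion, only for the nut-graph refinement.
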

\begin{proof}
By the hypotheses on $H$, $0$ is a simple eigenvalue of $A_G$, so that $G$ has nullity one. The corresponding kernel vector is $y = [v^T, -v^T]^T$. Since $v$ is a full vector, $y$ is also full. Therefore, $G$ is a nut graph completing the proof.
\end{proof}

In the next result, we consider the vertex addition operation.

\begin{theorem}\label{thm:single_vertex_addition} 
Let $G$ be a graph with a dominating vertex $1 \in V$. Suppose that the first coordinate of any non-zero vector in $N(A_G)$ is zero. Suppose there exist a non-empty, non-duplicate subset $S \subseteq V$ such that $\chi_S \perp N(A_G)$. Then (the diameter two graph) $H:=G + v_a$ obtained by adjoining a new vertex $v_a$ in such a way that $N(v_a) = S \cup \{1\}$, satisfies the ACK conjecture. 
\end{theorem}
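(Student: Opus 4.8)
The plan is to verify the hypotheses of Proposition~\ref{prop:ackequivgeneral}: I will produce a non-empty, non-duplicate subset $T\subseteq V(H)$ with $\chi_T\in N(A_H)^{\perp}=R(A_H)$. The starting point is two membership facts about $R(A_G)$. Since $A_G$ is symmetric, the hypothesis that the first coordinate of every vector of $N(A_G)$ vanishes says exactly that $\chi_{\{1\}}\perp N(A_G)$, i.e. $\chi_{\{1\}}\in R(A_G)$. As vertex $1$ is dominating, its row of $A_G$ is $\chi_{V\setminus\{1\}}={\bf e}-\chi_{\{1\}}$, which also lies in $R(A_G)$; adding the two gives ${\bf e}\in R(A_G)$. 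Consequently $a^{v_a}=\chi_{S\cup\{1\}}\in R(A_G)$, because it equals either $\chi_S$ or $\chi_S+\chi_{\{1\}}$ and $\chi_S\in R(A_G)$ by hypothesis.

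Next I would describe $N(A_H)$. Writing $A_H=\left(\begin{smallmatrix}A_G & b\\ b^{T}&0\end{smallmatrix}\right)$ with $b=a^{v_a}$, a vector $(y,t)$ lies in $N(A_H)$ iff $A_Gy=-tb$ and $b^{T}y=0$. Since $v_a$ is adjacent to $1$ and is the only added vertex (hence non-duplicated) and $b\in R(A_G)$, Theorem~\ref{parterchar} shows $v_a$ is \emph{not} a Parter vertex, so $\eta(H)\ge\eta(G)$. More precisely, fix $u$ with $A_Gu=b$; the kernel always contains $\{(z,0):z\in N(A_G)\}$, and it contains exactly one further independent vector $(-u,1)$ precisely when the Schur-type scalar $b^{T}u=u^{T}A_Gu$ vanishes (then $\eta(H)=\eta(G)+1$, otherwise $\eta(H)=\eta(G)$).

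The clean case is when the nullity does not increase. Here the natural witness is the all-ones vector ${\bf e}\in\mathbb{R}^{n+1}$, that is, $T=V(H)$: it is never a row of $A_H$ (graphs have no loops), it is non-empty, and $\langle {\bf e},(z,0)\rangle=\langle {\bf e},z\rangle=0$ because ${\bf e}\in R(A_G)\perp N(A_G)$. Thus ${\bf e}\in R(A_H)$ and Proposition~\ref{prop:ackequivgeneral} closes this case; equivalently, $0$ is not a main eigenvalue of $H$. This witness in fact works whenever $(-u,1)$ happens to be orthogonal to ${\bf e}$.

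I expect the genuine difficulty to be the degenerate case $b^{T}u=0$, where $\eta(H)=\eta(G)+1$ and the all-ones vector can fail: orthogonality of ${\bf e}$ to $(-u,1)$ reduces, via the dominating-vertex identity $\sum_{j\neq 1}u_j=1$ (row $1$ of $A_Gu=b$, recalling that $u_1$ is well defined because $N(A_G)$ has zero first coordinate), to the single scalar condition $u_1=0$, which is not automatic. My plan for this case is to replace ${\bf e}$ by another non-duplicate witness furnished by Proposition~\ref{prop:ackequivgeneral}: since $\chi_S,\ \chi_{\{1\}},\ {\bf e},\ \chi_{S\cup\{1\}}$ all lie in $R(A_G)$, one searches among the resulting $\{0,1\}$-vectors $w\in R(A_G)$ (with $w$ neither a row of $A_G$ nor equal to $a^{v_a}$) for one whose pairing $\langle u,w\rangle$ lands in $\{0,1\}$; for such a $w$ the lift $(w,\langle u,w\rangle)$ is a non-row $\{0,1\}$-vector of $R(A_H)$, yielding ACK. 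Guaranteeing the existence of such a $w$ — equivalently, excluding the possibility that $a^{v_a}$ is the only admissible non-duplicate characteristic vector in $R(A_H)$ — is the crux, and is exactly where the interplay between $S$, the dominating vertex, and the value $u_1$ must be exploited.
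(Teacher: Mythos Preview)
The paper defers the proof of this theorem to a future revision, so no line-by-line comparison is possible. Your skeleton is sound up to and including the case $\eta(H)=\eta(G)$: the observations $e_1,\chi_S,\mathbf{e}\in R(A_G)$, hence $b=\chi_{S\cup\{1\}}\in R(A_G)$, are correct; the block description of $N(A_H)$ is correct; and the witness $T=V(H)$ closes the non-degenerate case via Proposition~\ref{prop:ackequivgeneral}. (One small slip: you claim $v_a$ is automatically non-duplicated, but the hypotheses only say $\chi_S$ is not a row of $A_G$; they do not forbid $\chi_{S\cup\{1\}}$ from being one. This does not affect the kernel analysis, which you can do directly from the block form without invoking Theorem~\ref{parterchar}.)

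The proposal is, however, explicitly incomplete in the degenerate case $b^{T}u=0$ (equivalently $\eta(H)=\eta(G)+1$), and this is a genuine gap rather than a cosmetic one. Your own computations show that every natural $\{0,1\}$-candidate built from $\{1\}$, $S$, and $V$ is orthogonal to $(-u,1)$ only for one specific value of $u_1$, while the two candidates that work unconditionally --- $V(H)\setminus\{1\}$ and $S\cup\{1\}$ --- are exactly the rows of $A_H$ at vertices $1$ and $v_a$. Your proposed fallback, to search among other $\{0,1\}$-vectors of $R(A_G)$ for one with $\langle u,\cdot\rangle\in\{0,1\}$, is only a strategy; you give no reason such a vector must exist. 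The Remark following the theorem indicates that the authors' intended route is different: they assert that ``the kernel equations decouple and force all new coordinates to vanish,'' i.e., that the degenerate case never occurs. But this is not obvious from the stated hypotheses either: when $A_G$ is nonsingular the hypotheses on $N(A_G)$ are vacuous, and the nullity of $H$ jumps precisely when $b^{T}A_G^{-1}b=0$, the very condition underlying the nut-graph construction in item~2(b) of Section~\ref{sec:prel}. So either an implicit extra hypothesis is intended, or a substantive argument excluding (or handling) the degenerate case is still required, and your proposal does not supply one.
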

\begin{proof} The proof will be included in the revision to this version.

\end{proof}




\begin{remark}
The construction in Theorem~\ref{thm:single_vertex_addition} readily extends to the simultaneous addition of multiple vertices. If $S_1,\dots,S_k \subseteq V(G)$ are non-empty, non-duplicate and disjoint subsets
such that $\chi_{S_i} \perp N(A(G))$ for each $i$, then adjoining vertices
$v_1,\dots,v_k$ with
\[
N(v_i) = S_i \cup \{1\}, \qquad i=1,\dots,k,
\]
produces a graph satisfying the ACK conjecture. The proof follows in same lines, since the kernel
equations decouple and force all new coordinates to vanish.
\end{remark}

\begin{example}
Let us start with graph $G$ with $16$ vertices given in the Figure \ref{fig:F18a} whose kernel eigenvector is
\[
x = (0,0,1,0,-1,0,0,0,\,0,0,\,0,\,0,\,-1,0,1,0,\,0,\,0)^T,\qquad A_Gx=0.
\]
Consider $S_1 = \{3,5\}$ and $S_2 = \{13,15\}$. 
Form $F$ by adding vertex $19$ and $20$ in such a way that  $N(19)=\{1,3,5\}$ and $N(20)=N(5)=\{1,13,15\}$.
\[
 y=(x,0,0)=(0,0,1,0,-1,0,0,0,\,0,0,\,0,\,0,\,-1,0,1,0,\,0,\,0,0,0)^T,\qquad A_Fy=0.
\]
\begin{figure}[ht]
\centering
\begin{tikzpicture}[scale=1.5] 

    \tikzset{
        v_dom/.style = {circle, fill=blue, draw=black, minimum size=4pt, inner sep=0pt},
        v_4/.style   = {circle, fill=red, draw=black, minimum size=4pt, inner sep=0pt},
        v_2/.style   = {circle, fill=black, draw=black, minimum size=4pt, inner sep=0pt}
    }

    \node[v_dom, label=above:1] (vdom) at (0,0) {};

    \foreach \i in {1,...,11} {
        \pgfmathtruncatemacro{\labelnum}{\i+1} 
        \pgfmathtruncatemacro{\angle}{360/11 * (\i - 1) + 90} 
        \node[v_4, label=\angle:\labelnum] (u\i) at (\angle : 1.1cm) {};
    }

    \node[v_2, label={360/11 * (1 - 1) + 100}:13] (w1) at ({360/10 * (1 - 1) + 100} : 1.8cm) {};
    \node[v_2, label={360/11 * (3 - 1) + 90}:14] (w3) at ({360/11 * (3 - 1) + 100} : 1.8cm) {};
    \node[v_2, label={360/3 * (5 - 1) + 10}:15] (w5) at ({360/11 * (5 - 1) + 100} : 1.8cm) {};
    \node[v_2, label={360/11 * (7 - 1) + 100}:16] (w7) at ({360/11 * (7 - 1) + 100} : 1.8cm) {};
    \node[v_2, label={360/11 * (9 - 1) + 100}:17] (w9) at ({360/11 * (9 - 1) + 100} : 1.8cm) {};
    \node[v_2, label={360/11 * (11 - 1) + 100}:18] (w11) at ({360/11 * (11 - 1) + 100} : 1.8cm) {};

    \foreach \i in {1,...,11} {
        \draw[thick] (vdom) -- (u\i);
    }
    \foreach \i in {w1,w3,w5,w7,w9,w11} {
        \draw[thick] (vdom) -- (\i);
    }

    \draw[thick] (u1) -- (u2) -- (u3) -- (u4) -- (u5) -- (u6) -- (u7) -- (u8) -- (u9) --(u10) -- (u11) -- (u1);

    \draw[thick] (u1) -- (w1);
    \draw[thick] (u3) -- (w3);
    \draw[thick] (u5) -- (w5);
    \draw[thick] (u7) -- (w7);
    \draw[thick] (u9) -- (w9);
    \draw[thick] (u11) -- (w11);
    
    \draw[thick] (u5) -- (u8);
    \draw[thick] (u3) -- (u5);
    \draw[thick] (u6) -- (u10);
    \draw[thick] (w3) to[bend right=50] (w7);
\end{tikzpicture}
\begin{tikzpicture}[scale=1.5] 

    \tikzset{
        v_dom/.style = {circle, fill=blue, draw=black, minimum size=4pt, inner sep=0pt},
        v_4/.style   = {circle, fill=red, draw=black, minimum size=4pt, inner sep=0pt},
        v_2/.style   = {circle, fill=black, draw=black, minimum size=4pt, inner sep=0pt},
        v_a/.style   = {circle, fill=white, draw=black, minimum size=4pt, inner sep=1pt},
    }

    \node[v_dom, label=above:1] (vdom) at (0,0) {};

    \foreach \i in {1,...,11} {
        \pgfmathtruncatemacro{\labelnum}{\i+1} 
        \pgfmathtruncatemacro{\angle}{360/11 * (\i - 1) + 90} 
        \node[v_4, label=\angle:\labelnum] (u\i) at (\angle : 1.1cm) {};
    }

    \node[v_2, label={360/11 * (1 - 1) + 100}:13] (w1) at ({360/11 * (1 - 1) + 100} : 1.8cm) {};
    \node[v_2, label={360/11 * (3 - 1) + 90}:14] (w3) at ({360/11 * (3 - 1) + 100} : 1.8cm) {};
    \node[v_2, label={360/3 * (5 - 1) + 10}:15] (w5) at ({360/11 * (5 - 1) + 100} : 1.8cm) {};
    \node[v_2, label={360/11 * (7 - 1) + 100}:16] (w7) at ({360/11 * (7 - 1) + 100} : 1.8cm) {};
    \node[v_2, label={360/11 * (9 - 1) + 100}:17] (w9) at ({360/11 * (9 - 1) + 100} : 1.8cm) {};
    \node[v_2, label={360/11 * (11 - 1) + 100}:18] (w11) at ({360/11 * (11 - 1) + 100} : 1.8cm) {};

\node[v_a, label={360/11 * (1 - 1) + 150}:19] (va) at ({360/10 * (3 - 1) + 100} : 2.7cm) {};
    \node[v_a, label={360/11 * (1 - 1) + 90}:20] (vb) at ({360/11 * (2 - 1) + 100} : 2.7cm) {};

    \foreach \i in {1,...,11} {
        \draw[thick] (vdom) -- (u\i);
    }
    \foreach \i in {w1,w3,w5,w7,w9,w11} {
        \draw[thick] (vdom) -- (\i);
    }
    \foreach \i in {va,vb} {
        \draw[thick] (vdom) -- (\i);
    }

    \draw[thick] (u1) -- (u2) -- (u3) -- (u4) -- (u5) -- (u6) -- (u7) -- (u8) -- (u9) --(u10) -- (u11) -- (u1);

    \draw[thick] (u1) -- (w1);
    \draw[thick] (u3) -- (w3);
    \draw[thick] (u5) -- (w5);
    \draw[thick] (u7) -- (w7);
    \draw[thick] (u9) -- (w9);
    \draw[thick] (u11) -- (w11);

    \draw[thick] (va) -- (u2);
    \draw[thick] (va) -- (u4);
    \draw[thick] (vb) -- (w1);
    \draw[thick] (vb) -- (w5);
    
    \draw[thick] (u5) -- (u8);
    \draw[thick] (u3) -- (u5);
    \draw[thick] (u6) -- (u10);
    \draw[thick] (w3) to[bend right=50] (w7);
\end{tikzpicture}
\caption{Dependence relation for both graphs is \(a^3-a^5+a^{15}-a^{13}= 0.\)}
\label{fig:F18a}
\end{figure}
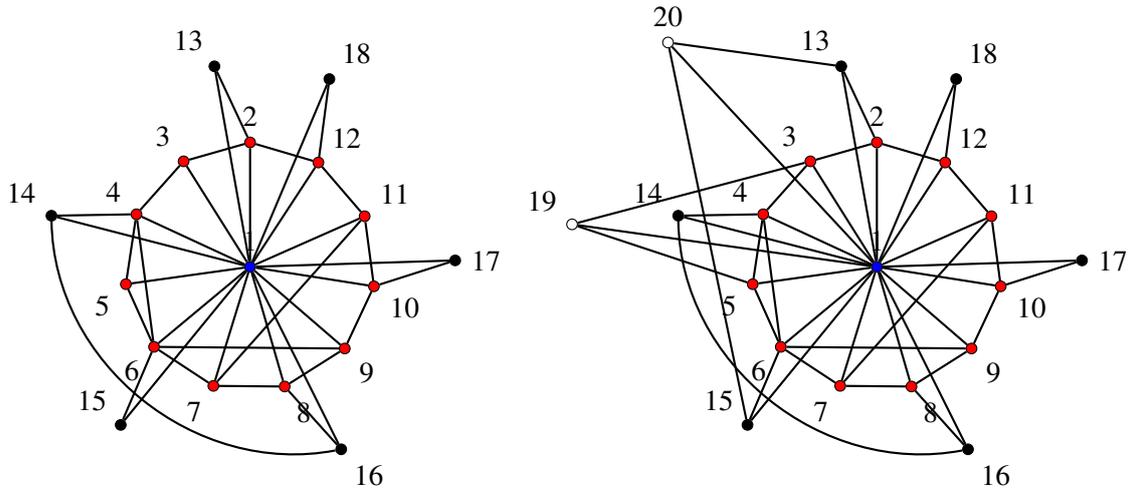

\end{example}

\section{Core graphs satisfying ACK conjecture}\label{sec:coreack}
In this concluding section, we obtain two classes of graphs satisfying the ACK conjecture. The first type is identified in the next result, while the second graph class is presented in Theorem \ref{secondcore}.

\begin{theorem}\label{firstcore}
Let $H$ be a simple graph such that $A_H$ is invertible. Set $B:=A_H^{-1}$. 
Let $S_1,\dots,S_k\subseteq V$ be nonempty subsets with characteristic vectors 
$c_1,\dots,c_k\in\{0,1\}^{|V|}$, respectively. Set $C:=[c_1,\ \cdots,\ c_k]$. 
Form a graph $G$ by adjoining new vertices $v_1,\dots,v_k$ to $H$ such that $N(v_i)=S_i$, for each $i$. Then the following hold:\\
(a) $G$ is a core graph (with nullity at least $k$) if and only if 
{$BC$ is a full matrix and $C^T B C = 0.$}\\
(b) If there exists an index $i$ for which $S_i$ is not equal to the neighbourhood of any vertex of $H$. Then \(G\) satisfies the ACK conjecture.
\end{theorem}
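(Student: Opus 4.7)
The plan is to apply Proposition~\ref{prop:ackequivgeneral}: it suffices to exhibit a nonempty, non-duplicate subset $T \subseteq V(G)$ with $\chi_T \in R(A_G) = N(A_G)^{\perp}$. I would first split into two cases based on the invertibility of $A_G$. When $A_G$ is invertible---equivalently, by the Schur complement formula, when $C^{T}BC$ is invertible---the graph $G$ is nonsingular and satisfies the ACK conjecture by the result on nonsingular graphs cited in the Introduction. Henceforth I assume $A_G$ is singular.

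For the singular case, using invertibility of $A_H$ and the block form of $A_G$, I would parametrize
\[
N(A_G) \;=\; \{(-BCy,\, y)^{T} : y \in \ker(C^{T}BC)\},
\]
so that $(f, g)^{T} \in R(A_G)$ if and only if $g - C^{T}Bf \in R(C^{T}BC)$. With this characterisation my primary candidate is $T := S_i \cup \{v_i\}$, whose characteristic vector in the block decomposition $V(G) = V \sqcup \{v_1,\dots,v_k\}$ is $\chi_T = (c_i,\, e_i^{(k)})^{T}$, with $e_i^{(k)}$ the $i$-th standard basis vector in $\mathbb{R}^k$. Non-duplication follows from a case split on the two types of rows of $A_G$: the $v_j$-rows $(c_j, 0)^{T}$ are ruled out since $e_i^{(k)} \ne 0$, while the $u$-rows $(a_H^u,\, d_u)^{T}$ (for $u \in V$, with $(d_u)_j = \mathbf{1}[u \in S_j]$) are ruled out by the hypothesis $c_i \ne a_H^u$. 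A brief computation exploiting $c_i^{T}BC = ((C^{T}BC)_{*,i})^{T}$ and $(C^{T}BCy)_i = 0$ for $y \in \ker(C^{T}BC)$ reduces the orthogonality $\chi_T \perp N(A_G)$ to the single condition $e_i^{(k)} \in R(C^{T}BC)$.

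The hard part will be ensuring this last condition. It may fail when $C^{T}BC$ is singular but nonzero, and certainly fails in the core-graph regime $C^{T}BC = 0$ of part~(a). My fall-back construction in this residual situation is
\[
T' \;:=\; (S_i \cup N_H(u)) \cup \{v_j : u \in S_j\}
\]
for a vertex $u \in V$ satisfying $N_H(u) \cap S_i = \emptyset$. This yields the $\{0,1\}$-vector $\chi_{T'} = (c_i + a_H^u,\, d_u)^{T}$, and the orthogonality $\chi_{T'} \in R(A_G)$ follows from the identity $C^{T}B(c_i + a_H^u) = (C^{T}BC)_{*,i} + d_u$ together with $(C^{T}BC)_{*,i} \in R(C^{T}BC)$. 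Non-duplication reduces to excluding $c_i + a_H^u = c_j$ (which would force $S_i \subsetneq S_j$ with $S_j \setminus S_i = N_H(u)$ and $d_u = 0$) and $c_i + a_H^u = a_H^{u'}$ (which would force $N_H(u) \subsetneq N_H(u')$ with $S_i = N_H(u') \setminus N_H(u)$, contradicting $c_i$ not being a row of $A_H$ for generic $u$). The principal technical obstacle is therefore guaranteeing the existence of a suitable $u$; in particular, when $S_i$ is a total dominating set in $H$ no such $u$ exists, and a separate argument---for instance taking $T = V$, or a multi-vertex modification---will be required, drawing on the nonzero row-sum properties of $B$ guaranteed by invertibility of $A_H$.
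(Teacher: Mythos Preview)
The paper defers its own proof of this theorem to a later revision, so no direct comparison is possible; evaluating your proposal on its merits, there are two genuine gaps. First, you do not address part~(a) at all. From your own parametrisation $N(A_G)=\{(-BCy,\,y)^{T}:y\in\ker(C^{T}BC)\}$ the nullity of $G$ equals $\dim\ker(C^{T}BC)$, so nullity $\ge k$ is equivalent to $C^{T}BC=0$; the core condition then asks for a full $y\in\mathbb{R}^k$ with $BCy$ full, and you must still argue the equivalence with the stated hypothesis on $BC$. (In doing so you may wish to examine whether ``$BC$ is a full matrix'' is really what is needed for the ``only if'' direction, or whether the weaker condition ``$BC$ has no zero row'' is what the core property actually forces when $k\ge 2$.)

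Second, your argument for part~(b) is, as you yourself concede, incomplete. Your primary candidate $T=S_i\cup\{v_i\}$ fails precisely in the core regime $C^{T}BC=0$ that part~(a) singles out as the case of interest. Your fall-back $T'$ requires a vertex $u\in V(H)$ with $N_H(u)\cap S_i=\emptyset$, whose existence you do not establish; when $S_i$ is a total dominating set of $H$ no such $u$ exists. Even when such $u$ is available, the exclusion of $c_i+a_H^{u}=a_H^{u'}$ with $d_u=d_{u'}$ is asserted only ``for generic $u$'', which is not an argument. Your residual suggestion of taking $T=V$ amounts, in the core regime, to requiring $c_j^{T}B\mathbf{e}=1$ for every $j$ (if $T=V(G)$) or $c_j^{T}B\mathbf{e}=0$ for every $j$ (if $T=V(H)$), and neither follows from the hypotheses; invertibility of $A_H$ does not by itself yield any ``nonzero row-sum property'' of $B=A_H^{-1}$. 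Your block-kernel computations and the range characterisation $g-C^{T}Bf\in R(C^{T}BC)$ are correct and the overall framework is sound, but the proof as written does not close.
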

\begin{proof}
The proof will be included in the revision to this version.
\end{proof}

Let us give an illustration of Theorem \ref{firstcore}
for $k=2.$

\begin{example}
Let $H$ be the graph on six vertices $\{1,\dots,8\}$ with 
adjacency matrix
\[
A_H=
\begin{pmatrix}
0 & 0 & 1 & 0 & 1 & 1 & 0 & 0\\
0 & 0 & 0 & 1 & 1 & 1 & 0 & 1\\
1 & 0 & 0 & 0 & 1 & 0 & 1 & 1\\
0 & 1 & 0 & 0 & 1 & 1 & 1 & 0\\
1 & 1 & 1 & 1 & 0 & 1 & 0 & 1\\
1 & 1 & 0 & 1 & 1 & 0 & 0 & 1\\
0 & 0 & 1 & 1 & 0 & 0 & 0 & 0\\
0 & 1 & 1 & 0 & 1 & 1 & 0 & 0
\end{pmatrix}.
\]
This matrix is invertible with $\det A_H=-1$.
Choose the subsets
\[
S_1=\{6,8\},\qquad S_2=\{1,2\}.
\]
The corresponding characteristic vectors are
\[
c_1=(0,0,0,0,0,1,0,1)^T,\qquad 
c_2=(1,1,0,0,0,0,0,0)^T.
\]
\[
B c_1 = (-1,1,\tfrac12, -\tfrac12, \tfrac12, -1, -\tfrac12, 1)^T, \qquad
B c_2 = (1, -1, -\tfrac12, \tfrac12, \tfrac12, 1, -\tfrac12, -1)^T.
\]
Note that no coordinate of \(Bc_1\) or \(BC_2\) is zero. Form the matrix \(C=[c_1\ c_2]\). A direct check yields
\[
C^T B C \;=\; \begin{pmatrix} 0 & 0 \\[4pt] 0 & 0 \end{pmatrix}.
\]

Therefore the hypotheses of Theorem~\ref{firstcore} are satisfied. Adjoining two new vertices \(v_9\) and \(v_{10}\) to \(H\) with neighborhoods \(N(v_9)=S_1\) and \(N(v_{10})=S_2\) produces a graph \(G\) with nullity at least \(2\); moreover its kernel contains the independent vectors
\[
y_1=(Bc_1, -e_1)^T,\qquad y_2=(Bc_2, -e_2)^T,
\]
and \(G\) satisfies the ACK conjecture.
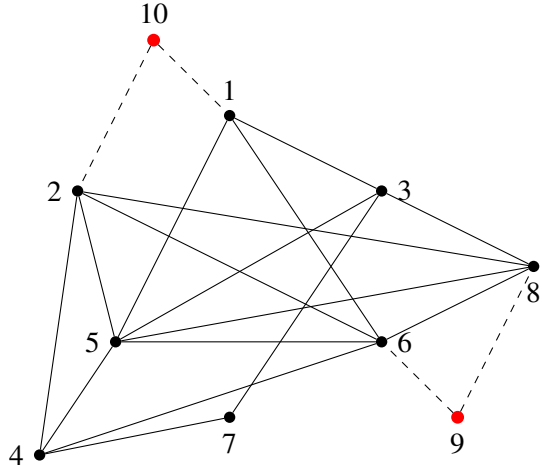
\begin{figure}[ht]
\centering
\begin{tikzpicture}
\tikzset{
        v_dom/.style = {circle, fill=blue, draw=black, minimum size=4pt, inner sep=0pt},
        v_4/.style   = {circle, fill=red, draw=black, minimum size=4pt, inner sep=0pt},
        v_2/.style   = {circle, fill=black, draw=black, minimum size=4pt, inner sep=0pt}
    }
\tikzstyle{v} = [circle, fill=black, inner sep=1.5pt, outer sep=0pt]
\tikzstyle{u} = [circle, fill=red, inner sep=1.7pt, outer sep=0pt]
    \node[v,label=above:$1$] (1) at (0,2) {};
    \node[v,label=left:$2$]  (2) at (-2,1) {};
    \node[v,label=right:$3$] (3) at (2,1) {};
    \node[v,label=left:$4$] (4) at (-2.5,-2.5) {};
    \node[v,label=left:$5$]  (5) at (-1.5,-1) {};
    \node[v,label=right:$6$] (6) at (2,-1) {};
    \node[v,label=below:$7$] (7) at (0,-2) {};
    \node[v,label=below:$8$] (8) at (4,0) {};

    \node[u,label=below:$9$] (9) at (3,-2) {};
    \node[u,label=above:$10$] (10) at (-1,3) {};
    


    \draw (1) -- (3);
    \draw (1) -- (5);
    \draw (1) -- (6);

    \draw (2) -- (4);
    \draw (2) -- (5);
    \draw (2) -- (6);
    \draw (2) -- (8);

    \draw (3) -- (5);
    \draw (3) -- (7);
    \draw (3) -- (8);

    \draw (4) -- (5);
    \draw (4) -- (6);
    \draw (4) -- (7);

    \draw (5) -- (6);
    \draw (5) -- (8);

    \draw (6) -- (8);


    \draw[dashed] (9) -- (6);
    \draw[dashed] (9) -- (8);

    \draw[dashed] (10) -- (1);
    \draw[dashed] (10) -- (2);

\end{tikzpicture}
\caption{Base graph \(H\) (vertices 1--8) with new vertices 9 and 10 attached: $N(9)=\{6,8\}$ and $N(10)=\{1,2\}$.}
\label{fig:multi-example}
\end{figure}
\end{example}

\begin{theorem}\label{secondcore}
Let $G$ be a nut graph. \\
(a) For $T = \{v_1, v_2, \dots, v_k\} \subseteq V$, let $F$ be the graph formed by duplicating each vertex $v_i \in T$, $m_i$ times, $m_i \geq 1$.Then, $F$ is a core graph.\\
(b) Let $G$ satisfy the ACK conjecture. Let $S$ be a non-empty, non-duplicate zero-sum subset of $V$ such that $S \cap T = \emptyset$, where $T$ is another subset of vertices. If $F$ is constructed from $T$ as above, then $F$ also satisfies the ACK conjecture.
\end{theorem}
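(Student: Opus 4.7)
The strategy is first to pin down $N(A_F)$ explicitly, and then to read off both claims. Order $V(F)$ as $V(G)$ followed by the duplicate vertices $v_i^{(j)}$ ($1\le i\le k$, $1\le j\le m_i$). Because every duplicate $v_i^{(j)}$ has open neighborhood $N_G(v_i)$ and is adjacent neither to $v_i$ nor to any other duplicate of $v_i$ (duplication is with respect to the open neighborhood, as in the paper's notion of non-duplicate vector), the column of $v_i^{(j)}$ in $A_F$ coincides with the column of $v_i$. Iterating the single-duplicate bordering computation, which shows that duplicating a vertex increases the nullity by exactly one, I obtain
\[
\eta(F)=\eta(G)+\sum_{i=1}^{k}m_i=1+\sum_{i=1}^{k}m_i \;\ge\; k,
\]
together with an explicit basis of $N(A_F)$ consisting of the extended nut vector $\tilde{x}:=(x^T,0,\dots,0)^T$ (zero on all duplicates), where $x$ is the nut vector of $G$, and the vectors $e_{v_i^{(j)}}-e_{v_i}$ for each duplicate $v_i^{(j)}$.

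\textbf{Proof of (a).} It remains to produce, for every vertex $u$ of $F$, a kernel vector non-vanishing at $u$. If $u\in V(G)$, the $u$-th entry of $\tilde{x}$ is $x_u\ne 0$, because the nut property of $G$ makes $x$ a full vector; this covers vertices both inside and outside $T$. If $u=v_i^{(j)}$ is a duplicate, the $u$-th entry of the kernel vector $e_{v_i^{(j)}}-e_{v_i}$ is $1$. Hence every vertex of $F$ is a core vertex and $F$ is a core graph.

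\textbf{Proof of (b).} By Remark~\ref{rem:ackequiv}, the hypothesis that $G$ satisfies the ACK conjecture supplies a non-empty, non-duplicate zero-sum subset $S\subseteq V(G)$; further assume $S\cap T=\emptyset$ as in the statement. Extend $\chi_S$ by zeros on all duplicate coordinates and continue to denote the extension by $\chi_S$. By Proposition~\ref{prop:ackequivgeneral}, it suffices to verify two facts. First, $\chi_S\in N(A_F)^\perp$: testing against $\tilde{x}$ gives $\sum_{v\in S}x_v=0$ by the zero-sum property, and testing against $e_{v_i^{(j)}}-e_{v_i}$ gives $\chi_S(v_i^{(j)})-\chi_S(v_i)=0-0=0$, the first term vanishing because $S\subseteq V(G)$ (so the duplicate is not in $S$) and the second because $v_i\in T$ while $S\cap T=\emptyset$. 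Second, $\chi_S$ is not a row of $A_F$: the restriction of any row of $A_F$ to the $V(G)$-coordinates is a row of $A_G$ (the $u$-th row of $A_G$ if indexed by $u\in V(G)$, and the $v_i$-th row of $A_G$ if indexed by a duplicate of $v_i$). Hence, if some row of $A_F$ equalled the extended $\chi_S$, restriction to $V(G)$ would produce a row of $A_G$ equal to $\chi_S$, contradicting the non-duplicate hypothesis on $S$ inside $G$.

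\textbf{Main obstacle.} The only technical point is the bookkeeping that composes the single-duplication nullity jump into the identification of $N(A_F)$ when several vertices are duplicated, possibly many times each, and into a basis compatible with all coordinates; once this basis is explicit, both (a) and (b) reduce to direct inner-product computations that exploit, respectively, the fullness of $x$ and the disjointness $S\cap T=\emptyset$.
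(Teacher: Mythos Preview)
Your proof is correct and follows the approach that the paper's accompanying example (immediately after the theorem) indicates: build an explicit basis of $N(A_F)$ from an extension of the nut vector together with the difference vectors $e_{v_i^{(j)}}-e_{v_i}$, then read off the core property coordinate by coordinate and, for part~(b), check $\chi_S\in N(A_F)^{\perp}$ against this basis and verify non-duplication by restricting rows of $A_F$ to the $V(G)$-block. The paper itself defers the actual proof to a later revision, so there is nothing further to compare against.

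One remark in your favour: the paper's example extends the nut vector to $F$ by \emph{repeating} the value $x_{v_i}$ on each duplicate of $v_i$, whereas you extend by zeros. Your $(x^T,0,\dots,0)^T$ is the correct choice: a direct computation at the row indexed by vertex~$2$ in the paper's $10$-vertex example gives $y_1+y_{1'}+y_5+y_{5'}+y_{5''}=2-3=-1\neq 0$, so the repeated-value vector $y$ is not in $N(A_F)$, while your zero-padded vector is. The difference vectors are the same in both accounts, so the overall strategy is identical; only the form of the extended nut vector differs, and yours is the one that works.
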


\begin{proof}
The proof will be included in the revision to this version.
\end{proof}

In the next example, we demonstrate the complicated construction of the previous result.

\begin{example}
We start with the 7-vertex nut graph $G$ of Figure \ref{fig:nut_construction} whose kernel eigenvector ($A_Gx=0$) is
\[
x=(1,\,1,\,-1,\,-1,\,-1,\,1,\,-1)^T.
\]
Form $F$ by duplicating vertex $1$ once and vertex $5$ twice; name the duplicates $1'$ and $5'$, with $N(1')=N(1)=\{2,3,4,6\}$ and $N(5')=N(5)=\{2,7\}$.
Define
\[
y=(x_1,x_1,x_2,x_3,x_4,x_5,x_5,x_5,x_6,x_7)^T=(1,\,1,\,1,\,-1,\,-1,\,-1,\,-1,\,-1,\,1,\,-1)^T.
\]
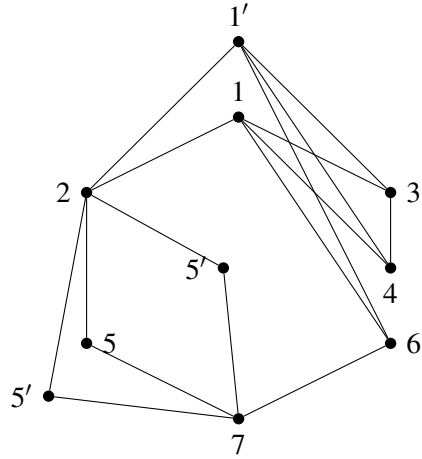
\begin{figure}[ht]
\centering
\begin{tikzpicture}
    \tikzstyle{v} = [circle, fill=black, inner sep=1.5pt, outer sep=0pt]

    \node[v,label=above:$1'$] (1') at (0,1) {};
    \node[v,label=above:$1$] (1) at (0,0) {};
    \node[v,label=left:$2$]  (2) at (-2,-1) {};
    \node[v,label=right:$3$] (3) at (2,-1) {};
    \node[v,label=below:$4$] (4) at (2,-2) {};
    \node[v,label=right:$5$]  (5) at (-2,-3) {};
    \node[v,label=left:$5'$]  (5') at (-0.2,-2) {};
    \node[v,label=left:$5'$]  (5'') at (-2.5,-3.7) {};
    \node[v,label=right:$6$] (6) at (2,-3) {};
    \node[v,label=below:$7$] (7) at (0,-4) {};

    \draw (1)--(2);
    \draw (1)--(3);
    \draw (1)--(4);
    \draw (1)--(6);

    \draw (1')--(2);
    \draw (1')--(3);
    \draw (1')--(4);
    \draw (1')--(6);
    
    \draw (2)--(5);
    \draw (2)--(5');
    \draw (2)--(5'');
    \draw (3)--(4);

    \draw (5)--(7);
    \draw (5')--(7);
    \draw (5'')--(7);
    \draw (6)--(7);
\end{tikzpicture}
\caption{A 10-vertex core graph constructed from a 7-vertex nut graph by duplicating vertices $1$ and $5$.}
\label{fig:nuttocore_example}
\end{figure}
Since $N(1')=N(1)$ and $N(5')=N(5)$,
we have $A_Fy=0$. The difference vectors are given by 
$$
d_{1,1}=(1,-1,0,0,0,0,0,0,0,0)^T,$$
$$d_{5,1}=(0,0,0,0,0,1,-1,0,0,0)^T$$
and $$d_{5,2}=(0,0,0,0,0,1,0,-1,0,0)^T.$$

It may be verified that $\{y, d_{11},d_{51}, d_{52}\}$ is linearly independent and that they lie in $N(A_F)$.
Moreover, every vertex of $F$ has a nonzero coordinate in at least one of these vectors, and so $F$ is a core graph.
\end{example}

\section*{Concluding Remarks}
In this work, we demonstrate that the necessary conditions for potential counter-examples to the Akbari--Cameron--Khosrovshahi conjecture identified by Sciriha et al. are not sufficient.
In particular, using kernel-vector-based zero-sum subsets and explicit graph constructions, we showed that for every $n \ge 7$ there exists a graph of order $n$ lying in the class $\mathcal{C}$ of potential counter-examples, that nevertheless satisfies the ACK conjecture. While a complete classification of graphs in $\mathcal{C}$ remains open, the methods developed here further narrow the range of possible counter-examples and provide new constructive and structural tools that support the validity of the conjecture.

\section*{Acknowledgements}
The authors gratefully acknowledge K. Kranthi Priya and Aditi Howlader for their valuable discussions and insightful comments that contributed to this work.


\end{document}